\newtheorem{lemma}{Lemma}[section]
\newtheorem{proposition}[lemma]{Proposition}
\newtheorem{theorem}[lemma]{Theorem}
\newenvironment{remark}{\textbf{Remark}}{}
\numberwithin{equation}{section}
\newcommand{\f}{\frac}
\newcommand{\pppp}[4]%
  {\frac{\partial^3{#1}}{\partial{#2}\partial{#3}\partial{#4}}}
\newcommand{\pab}{\alpha\phi\left(\frac{\beta}{\alpha}\right)}
\newcommand{\gab}{\alpha\phi\left(b^2,\frac{\beta}{\alpha}\right)}
\newcommand{\sq}{\frac{(\alpha+\beta)^2}{\alpha}}
\newcommand{\yI}{y^i}
\newcommand{\yk}{y_k}
\renewcommand{\a}{\alpha}
\renewcommand{\b}{\beta}
\newcommand{\ab}{(\alpha,\beta)}
\newcommand{\aij}{a_{ij}}
\newcommand{\bij}{b_{i|j}}
\newcommand{\bi}{b_i}
\newcommand{\bj}{b_j}
\newcommand{\bk}{b_k}
\newcommand{\bI}{b^i}
\newcommand{\rij}{r_{ij}}
\newcommand{\rIi}{r^i{}_i}
\newcommand{\rIk}{r^i{}_k}
\newcommand{\roo}{r_{00}}
\newcommand{\rko}{r_{k0}}
\newcommand{\rIo}{r^{i}{}_{0}}
\newcommand{\ri}{r_i}
\newcommand{\rj}{r_j}
\newcommand{\rI}{r^i}
\newcommand{\rk}{r_k}
\newcommand{\ro}{r_0}
\newcommand{\rIiho}{r^i{}_{i|0}}
\newcommand{\rIohi}{r^i{}_{0|i}}
\newcommand{\rIihb}{r^i{}_{i|b}}
\newcommand{\rooho}{r_{00|0}}
\newcommand{\roohk}{r_{00|k}}
\newcommand{\roohb}{r_{00|b}}
\newcommand{\rkoho}{r_{k0|0}}
\newcommand{\rkohb}{r_{k0|b}}
\newcommand{\rIhk}{r^i{}_{|k}}
\newcommand{\rIho}{r^i{}_{|0}}
\newcommand{\rIhi}{r^i{}_{|i}}
\newcommand{\rIhb}{r^i{}_{|b}}
\newcommand{\rkhb}{r_{k|b}}
\newcommand{\roho}{r_{0|0}}
\newcommand{\rohk}{r_{0|k}}
\newcommand{\rkho}{r_{k|0}}
\newcommand{\rohb}{r_{0|b}}
\newcommand{\rhI}{r_{|}{}^i}
\newcommand{\rhk}{r_{|k}}
\newcommand{\rhb}{r_{|b}}
\newcommand{\rIkho}{r^i{}_{k|0}}
\newcommand{\rIkhb}{r^i{}_{k|b}}
\newcommand{\rIohk}{r^i{}_{0|k}}
\newcommand{\rkohI}{r_{k0|}{}^{i}}
\newcommand{\rkhI}{r_{k|}{}^{i}}
\newcommand{\rIoho}{r^i{}_{0|0}}
\newcommand{\roohI}{r_{00|}{}^{i}}
\newcommand{\rIohb}{r^i{}_{0|b}}
\newcommand{\rohI}{r_{0|}{}^{i}}
\newcommand{\sij}{s_{ij}}
\newcommand{\sIk}{s^i{}_k}
\newcommand{\sIo}{s^i{}_0}
\newcommand{\sko}{s_{k0}}
\newcommand{\si}{s_i}
\newcommand{\sj}{s_j}
\newcommand{\sk}{s_k}
\newcommand{\sI}{s^i}
\newcommand{\so}{s_0}
\newcommand{\sIohi}{s^i{}_{0|i}}
\newcommand{\sIohk}{s^i{}_{0|k}}
\newcommand{\sIohb}{s^i{}_{0|b}}
\newcommand{\sIkho}{s^i{}_{k|0}}
\newcommand{\sIkhi}{s^i{}_{k|i}}
\newcommand{\sIoho}{s^i{}_{0|0}}
\newcommand{\sIhi}{s^i{}_{|i}}
\newcommand{\sIhk}{s^i{}_{|k}}
\newcommand{\skhI}{s_{k|}{}^{i}}
\newcommand{\sIho}{s^i{}_{|0}}
\newcommand{\soho}{s_{0|0}}
\newcommand{\sohb}{s_{0|b}}
\newcommand{\sohk}{s_{0|k}}
\newcommand{\sohI}{s_{0|}{}^i}
\newcommand{\skho}{s_{k|0}}
\newcommand{\co}{c_0}
\newcommand{\cb}{c_b}
\newcommand{\cI}{c^i}
\newcommand{\ck}{c_k}
\newcommand{\db}{d_b}
\newcommand{\dI}{d^i}
\newcommand{\dk}{d_k}
\newcommand{\poo}{p_{00}}
\newcommand{\pI}{p^i}
\newcommand{\pk}{p_k}
\newcommand{\po}{p_0}
\newcommand{\pIi}{p^i{}_i}
\newcommand{\pIk}{p^i{}_k}
\newcommand{\pko}{p_{k0}}
\newcommand{\pIo}{p^i{}_0}
\newcommand{\qk}{q_k}
\newcommand{\qI}{q^i}
\newcommand{\qIo}{q^i{}_0}
\newcommand{\qoo}{q_{00}}
\newcommand{\qoI}{q_{0}{}^i}
\newcommand{\qko}{q_{k0}}
\newcommand{\qok}{q_{0k}}
\newcommand{\qo}{q_0}
\newcommand{\qqk}{q^*{}_k}
\newcommand{\qqI}{q^*{}^i}
\newcommand{\qqo}{q^*{}_0}
\newcommand{\qIi}{q^i{}_{i}}
\newcommand{\too}{t_{00}}
\newcommand{\tIo}{t^i{}_{0}}
\newcommand{\tko}{t_{k0}}
\newcommand{\tk}{t_k}
\newcommand{\tI}{t^i}
\renewcommand{\to}{t_0}
\newcommand{\tIk}{t^i{}_{k}}
\newcommand{\tIi}{t^i{}_{i}}
\newcommand{\dIk}{\delta^i{}_k}
\newcommand{\RIkF}{{}^F{R}^i{}_k}
\newcommand{\RIk}{{R}^i{}_k}
\newcommand{\RIb}{{R}^i{}_b}
\newcommand{\Rbb}{R_{bb}}
\newcommand{\RbIkb}{R_{b}{}^i{}_{kb}}
\newcommand{\RoIkb}{R_{0}{}^i{}_{kb}}
\newcommand{\RbIko}{R_{b}{}^i{}_{k0}}
\newcommand{\Rbokb}{R_{b0kb}}
\newcommand{\RicooF}{{}^F{Ric_{00}}}
\newcommand{\Ricoo}{{Ric_{00}}}
\newcommand{\Rico}{{Ric_{0}}}
\newcommand{\Ric}{{Ric}}
\newcommand{\Rat}{\mathfrak{Rat}}
\newcommand{\Irrat}{\mathfrak{Irrat}}
\newcommand{\Rmnum}[1]{\expandafter\@slowromancap\romannumeral #1@}
\begin{document}
\title{On Riemann curvature of singular square metrics}
\footnotetext{\emph{Keywords}:Finsler geometry, general $\ab$-metric, singular square metric, flag curvature.
\\
\emph{Mathematics Subject Classification}: 53B40, 53C60.}

\author{Changtao Yu}

\date{2018.07.17}
\maketitle

\begin{abstract}
Square metrics is an important class of Finsler metrics. Recently, we introduced a special class of non-regular Finsler metrics called singular square metrics.  The main purpose of this paper is to provide a necessary and sufficient condition for singular square metrics to be of constant Ricci or flag curvature when dimension $n\geq3$.
\end{abstract}

\section{Introduction}
Square metrics $F=\f{(\a+\b)^2}{\a}$ are a special class of $\ab$-metrics~($\ab$-metrics are the Finsler metrics expressed as $F=\pab$, where $\phi$  is a smooth function, $\a$ is a Riemannian metric and $\b$ is a $1$-form). The first square metric in history is the famous Berwald's metric\cite{Be}
\begin{eqnarray*}
F=\f{(\sqrt{(1-|x|^2)|y|^2+\langle x,y\rangle^2}+\langle x,y\rangle)^2}{(1-|x|^2)^2\sqrt{(1-|x|^2)|y|^2+\langle x,y\rangle^2}},
\end{eqnarray*}
which is defined on the unit ball $\mathbb B^n(1)$ with all the straight line segments as its geodesics and is of constant flag curvature $K=0$.

In Finsler geometry, square metrics is the rate kind of metrical category to be of excellent geometrical properties. In 2007, Li-Shen prove that, except for the trivial case, any locally projectively flat $\ab$-metric with constant flag curvature is either a Randers metric or a square metric\cite{lb-szm-onac}. In 2012, Cheng-Tian and Sevim-Shen-Zhao prove independently that, except for the trivial case, any Douglas-Einstein $\ab$-metric is either a Randers metric or a square metric\cite{cxy-tyf,sevim-szm-zll}.

In 2014, Z.Shen and the author classified the Einstein square metric up to the classification of Einstein Riemannian metrics by using some particular $\b$-deformations tailored for square metrics\cite{szm-yct-oesm}. Our result is similar to the famous classification for Randers metrics with constant flag or Ricci curvature based on Zermelo's navigation problem\cite{db-robl-szm-zerm,db-robl-onri}.

All the results mentioned above merely apply to {\it regular} square metrics. It is known that a square metric $F=\f{(\a+\b)^2}{\a}$ is a regular Finsler metric if and only if the length of $\b$ with respect to $\a$, denoted by $b$, satisfies $b<1$.

Recently, we studied the flag curvature of general $\ab$-metrics $F=\gab$, which includes $\ab$-merics naturally\cite{yct-zhm-onan}. In such bigger metrical category, we found many new metrics which are locally projectively flat and of constant flag curvature beyond Randers metrics and (regular) square metrics\cite{yct-zhm-pfga}. Among these new class of metrics, one is given by
\begin{eqnarray}\label{singularBerwald}
F=\f{(b\a+\b)^2}{\a},
\end{eqnarray}
which can be also rewrote as $F=\f{(\breve\a+\breve\b)^2}{\breve\a}$ where $\breve\a=b^2\a$ and $\breve\b=b\b$. Obviously, the length of $\breve\b$ with respect to $\breve\a$, denoted by $\breve b$, satisfies $\breve b=\sup\f{\breve\b}{\breve\a}=\sup\f{\b}{b\a}=1$. Hence, (\ref{singularBerwald}) are just the square metrics $F=\sq$ with $b\equiv1$. Conversely, any square metric $F=\sq$ with $b\equiv1$ can be expressed as the form (\ref{singularBerwald}). See \cite{ossm} for details.

We will call the non-regular Finsler metrics (\ref{singularBerwald}) as {\it singular square metrics}. Although they are equivalent to square metrics $F=\sq$ with $b\equiv1$, we believe that the expression (\ref{singularBerwald}) is better. The main reason is that $b$ is {\it not necessary} a constant. This is important if one would like to discuss such metrics with $\b$-deformations\cite{yct-dhfp}. As an example, in \cite{ossm} we characterized Douglas singular square metrics and provided an incomplete classification by using $\b$-deformations.

The main results of this paper are the following characterizations for singular square metrics with constant Ricci or flag curvature.

\begin{theorem}\label{iwneingadad}
Let $F=\f{(b\a+\b)^2}{\a}$ be a singular square metric on a $n$-dimensional manifold with $n\geq3$. Then $F$ is an Einstein metric if and only if it is Ricci-flat, $\a$ and $\b$ satisfy
\begin{eqnarray}
\Ricoo&=&-\f{1}{9b^4}\big\{[9(n-1)b^2c^2-2b^6d^2-6b^4\db+144(n-3)t+36 b^2\sIhi]\a^2-2(n-2)b^4d^2\b^2\nonumber\\
&&-6(n-2)b^4\b d_0-72(n-2)c\b\so-84(n-2)b^2d\b\so+288(n-2)\so^2+36(n-2)b^2\soho\big\},\label{bewaldcondiononalpha}\\
\roo&=&c\a^2+d\b^2-\f{6}{b^2}\b\so,\label{bewaldcondiononbeta}\\
\sIo&=&\f{1}{b^2}(\so b^i-\b s^i),\label{bewaldcondiononbetas}
\end{eqnarray}
where $c=c(x)$ and $d=d(x)$ are scalar functions.
\end{theorem}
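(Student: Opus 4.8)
The plan is to begin from the explicit Riemann/Ricci curvature formula for general $\ab$-metrics established in \cite{yct-zhm-onan}, specialized to the singular square metric by writing $F=\a\,\phi(b^2,s)$ with $s=\b/\a$ and $\phi(b^2,s)=(\sqrt{b^2}+s)^2$. First I would record the spray coefficients of $F=\f{(b\a+\b)^2}{\a}$ in the standard decomposition $G^i=\G+Py^i+Q^i$, where the scalar $P$ and the vector $Q^i$ are assembled from $\a$, $\b$, the length $b$, and the covariant derivatives $\rij$, $\sij$ of $\b$ with respect to $\a$. Substituting the derivatives of the specific $\phi$ renders $P$ and $Q^i$ rational in $(\a,\b)$, so that the Finsler Ricci curvature, obtained by differentiating $G^i$ and tracing, takes the schematic form of a polynomial in $y$ divided by a power of $\a(b\a+\b)$, with coefficients built from the Riemannian Ricci $\Ricoo$ of $\a$ and from the $\b$-data $\roo,\ro,\ri,\so,\si,\sIo$.

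Next I would impose the Einstein condition, namely that the Finsler Ricci curvature equals $(n-1)\sigma(x)F^2$ with $F^2=\f{(b\a+\b)^4}{\a^2}$. Clearing denominators turns this into a single identity that is polynomial in $y$ once one remembers that $\a^2=a_{ij}y^iy^j$ and $\b=b_iy^i$ are polynomials while $\a$ itself is not. The crucial step is therefore to separate the identity into its rational part and the part carrying an odd power of $\a$; since $\a$ is irrational in $y$, these two must vanish separately, producing a coupled system of tensorial equations in $\Ricoo$, $\roo$, $\sIo$ and the Einstein scalar $\sigma$.

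I would then resolve this system by a degree analysis in $\b$ (equivalently in $s$). Collecting the monomials of top degree in $\b$ coming from the factor $(b\a+\b)^4$ in $F^2$ should force $\sigma\equiv0$, which is exactly the assertion that an Einstein singular square metric must be Ricci-flat: no curvature term of lower $\b$-degree can balance these leading monomials unless the Einstein scalar vanishes. With $\sigma=0$ in place, the intermediate-degree components isolate the algebraic structure of the mixed term, giving $\sIo=\f{1}{b^2}(\so\,\bI-\b\,\sI)$ as in \eqref{bewaldcondiononbetas}; feeding this back, the remaining equations pin down the quadratic form $\roo=c\a^2+d\b^2-\f{6}{b^2}\b\so$ of \eqref{bewaldcondiononbeta}, where $c=c(x)$ and $d=d(x)$ appear as the free scalar coefficients. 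Back-substituting both relations collapses the curvature identity to the single expression for $\Ricoo$ displayed in \eqref{bewaldcondiononalpha}, and the converse follows by direct verification that these three conditions make the Finsler Ricci curvature vanish.

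The step I expect to be the main obstacle is the rational/irrational separation and the ensuing bookkeeping: because $b=b(x)$ is deliberately \emph{not} assumed constant, its gradient enters through $\ri$ and related contractions, so the specialized Ricci formula is long and every tensorial component must be tracked through the clearing of the denominator $\a^2(b\a+\b)^4$. Correctly identifying which combination of coefficients forces $\sigma=0$, which one determines $\sIo$, and which one fixes $\roo$ — and verifying that nothing further is over-determined — is where the genuine difficulty lies; once the system is untangled, each individual identity is routine.
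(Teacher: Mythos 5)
Your outline agrees with the paper in its opening moves: both start from the explicit (machine-computed) Ricci curvature of the singular square metric, clear denominators, and split the Einstein equation into a rational part and a part carrying an odd power of $\a$, which must vanish separately. But the heart of the paper's argument is not a ``degree analysis in $\b$'' of the kind you describe, and two concrete mechanisms are missing from your plan. First, the structural information on $\roo$ is extracted from a divisibility argument: a suitable combination of the two polynomial equations exhibits $(b^4\roo-2b^2\b\ro+\b^2r)^2$ times a polynomial as a multiple of $b^2\a^2-\b^2$, and the decisive fact is that $b^2\a^2-\b^2$ is an \emph{irreducible} quadratic form when $n\geq3$ (its matrix has rank $n-1$), so it must divide $b^4\roo-2b^2\b\ro+\b^2r$ itself. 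This is precisely where the hypothesis $n\geq3$ enters, and your proposal never identifies it; the same irreducibility is used again to force $\theta=-\f{3}{b^2}\so$. Second, the condition (\ref{bewaldcondiononbetas}) on $\sIo$ is not read off from intermediate $\b$-degrees: it comes from showing that the scalar identity $b^4\too-2b^2\b\,\to+\b^2t+b^2\so^2=0$ is exactly the statement that the $\a$-norm of the vector $b^2s_{i0}+\b\si-\so\bi$ vanishes, whence the vector itself vanishes pointwise. Passing from one scalar polynomial identity to a full tensorial identity requires this perfect-square observation; degree bookkeeping alone does not yield it.

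There is also an ordering problem in your plan. You propose to force the Einstein scalar to vanish first, by looking at the top $\b$-degree. In the actual computation the coefficient of $K$ in the irrational part is $-18(n-1)b^4\b(b^2\a^2-\b^2)^4$, whose top $\b$-monomial is not a priori isolated from the contributions of $\ro^2$, $\roo\ro$, and similar terms; the paper can only conclude $K=0$ \emph{after} substituting the already-established form $\roo=c\a^2+d\b^2+2\b\theta$, at which point everything except $-18(n-1)Kb^4\b^9$ is manifestly divisible by $\a^2$. So the deduction $K=0$ depends on first pinning down $\roo$, not the other way around. Your converse direction is also understated: verifying sufficiency requires rebuilding the auxiliary identities (such as (\ref{woeminaddd'amd})--(\ref{siohiBerwald})) from (\ref{bewaldcondiononalpha})--(\ref{bewaldcondiononbetas}) via the a priori Ricci identities of the Lemma in Section 2, which is a genuine step rather than ``direct verification.''
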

\begin{remark}
$\ck:=c_{|k}$, $\cI:=a^{ik}\ck$, $\co:=\ck y^k$, $\cb:=\ck b^k$~($\dk$ has similar rules).
\end{remark}

\begin{remark}
If $\a$ and $\b$ satisfy (\ref{bewaldcondiononalpha})-(\ref{bewaldcondiononbetas}), then $\breve\a=b^2\a$ and $\breve\b=b\b$ satisfy
\begin{eqnarray*}
{}^{\breve{\alpha}}Ric_{00}&=&-\f{1}{3}\breve\tau^2\left\{(3n-5)\breve\a^2-2(n-2)\breve\b^2\right\},\\
\breve b_{i|j}&=&\breve\tau(\breve a_{ij}-\breve b_i\breve b_j),\\
\breve\tau_i&=&-\f{2}{3}\breve\tau^2\breve b_i,
\end{eqnarray*}
where $\breve\tau=\frac{3c+2b^2d}{b^3}$. In this case, the metric is given by $F=\frac{(\breve\a+\breve\b)^2}{\breve\a}$ with $\breve b\equiv1$. It matches with Theorem 2.1 in \cite{szm-yct-oesm}. But we can not obtain a conclusion similar to Theorem 1.1 (see also Lemma 2.3) in \cite{szm-yct-oesm} since $\bar b\equiv1$.
\end{remark}

\begin{theorem}\label{odnanganndnand}
Let $F=\f{(b\a+\b)^2}{\a}$ be a singular square metric on a $n$-dimensional manifold with $n\geq3$. Then $F$ has constant flag curvature $K$  if and only if $K=0$, $\a$ and $\b$ satisfy
\begin{eqnarray}
\RIk&=&-\f{1}{9b^4}\big\{[(9b^2c^2+144t)\a^2-2b^4d^2\b^2-6(b^4d_0+12c\so+14b^2d\so)\b+288\so^2+36b^2\soho]\dIk\nonumber\\
&&-(9b^2c^2+144t)\yI\yk+2b^2(b^2d^2\b+3b^2d_0+6d\so)(\yI\bk+\yk\bI)-2b^4d^2\a^2\bI\bk-3b^4\a^2(\bI\dk+\bk d^i)\nonumber\\
&&-6\a^2(6c+7b^2d)(\bI\sk+\bk\sI)+72[(c+b^2d)\b-4\so](\yI\sk+\yk\sI)+288\a^2\sI\sk-36b^2(\yI\skho+\yk\sIho)\nonumber\\
&&+18b^2\a^2(\sIhk+\skhI)\big\},\label{bewaldcondiononalphaflag}\\
\rij&=&c\aij+d\bi\bj-\f{3}{b^2}(\bi\sj+\bj\si),\label{bewaldcondiononbetarijflag}\\
\sij&=&\f{1}{b^2}(\bi\sj-\bj\si),\label{bewaldcondiononbetasijflag}
\end{eqnarray}
where $c=c(x)$ and $d=d(x)$ are scalar functions.
\end{theorem}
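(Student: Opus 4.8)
The plan is to bootstrap from Theorem~\ref{iwneingadad}, since a metric of constant flag curvature is in particular Einstein. First I would record that if $F$ has constant flag curvature $K$, then its Finsler Riemann curvature satisfies $\RIkF=K(F^2\dIk-\yI\yk)$, where $\yk$ denotes the lowering of $y$ by the fundamental tensor; tracing over $i=k$ gives $\Ric=(n-1)KF^2$, so $F$ is Einstein with Einstein scalar $K$. By Theorem~\ref{iwneingadad} such a metric must be Ricci-flat, whence $(n-1)KF^2\equiv0$ and therefore $K=0$. This pins down $K$ immediately, and the problem collapses to deciding when $\RIkF\equiv0$. Theorem~\ref{iwneingadad} simultaneously supplies the contracted identities (\ref{bewaldcondiononbeta}) and (\ref{bewaldcondiononbetas}); because these hold as polynomial identities in $y$, with $\rij$ symmetric and $\sij$ skew-symmetric, polarizing in $y$ recovers exactly the uncontracted forms (\ref{bewaldcondiononbetarijflag}) and (\ref{bewaldcondiononbetasijflag}). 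Hence the only genuinely new statement is the full-tensor formula (\ref{bewaldcondiononalphaflag}) for the Riemannian curvature $\RIk$ of $\a$, which refines the Ricci formula (\ref{bewaldcondiononalpha}) and must reduce to it under contraction.

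The computational engine is the known expression for the Finsler Riemann curvature $\RIkF$ of a general $\ab$-metric $F=\gab$ in terms of the Riemannian curvature $\RIk$ of $\a$ together with $\rij$, $\sij$ and their first and second covariant derivatives with respect to $\a$. Specializing to $\phi(b^2,s)=(b+s)^2$, that is $F=\a(b+s)^2$ with $s=\b/\a$, and inserting the now-established forms (\ref{bewaldcondiononbetarijflag})-(\ref{bewaldcondiononbetasijflag}), I would reduce $\RIkF$ to a quantity rational in $\a=\sqrt{\aij\yI y^j}$ whose coefficients are assembled from $\RIk$, the vectors $\bI,\bK,\sI,\sk$, the scalars $c,d$, and the derivative terms $\soho,\skho,\sIho,\sIhk,\skhI$ that appear in (\ref{bewaldcondiononalphaflag}). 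Throughout, I would repeatedly use $\bi\bI=b^2$, $\sI\bi=0$ and the skew-symmetry of $\sij$ to collapse the intermediate terms.

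Setting $\RIkF=0$ then gives a tensorial identity on $TM$ which, after clearing the powers of $\a$ in the denominators, takes the form $P^i{}_k+\a\,Q^i{}_k=0$ with $P^i{}_k$ and $Q^i{}_k$ polynomial in $y$; since $\a$ is irrational, the rational part $P^i{}_k$ and the irrational part $Q^i{}_k$ must vanish separately. Solving the rational equation $P^i{}_k=0$ for $\RIk$ produces precisely (\ref{bewaldcondiononalphaflag}), while the irrational equation $Q^i{}_k=0$, together with the Ricci commutation identities relating the second covariant derivatives of $\b$, serves as the consistency condition and confirms that no constraint on $\rij,\sij$ beyond (\ref{bewaldcondiononbetarijflag})-(\ref{bewaldcondiononbetasijflag}) is produced. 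The sufficiency direction is the same computation read backwards: substituting (\ref{bewaldcondiononalphaflag})-(\ref{bewaldcondiononbetasijflag}) into the master formula should cancel every term, giving $\RIkF\equiv0$ and hence constant flag curvature $K=0$.

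The hard part will be the curvature computation together with the rational/irrational separation. The master formula for $\RIkF$ is long, and after inserting (\ref{bewaldcondiononbetarijflag})-(\ref{bewaldcondiononbetasijflag}) one must carefully propagate the second covariant derivatives of $\sij$ and organize the resulting polynomial in $\b$ and $\a$ so that the compact right-hand side of (\ref{bewaldcondiononalphaflag}) emerges. As an independent check I would pass to the deformed data $\breve\a=b^2\a$ and $\breve\b=b\b$ with $\breve b\equiv1$: as in the Remark following Theorem~\ref{iwneingadad}, the conditions should collapse to the constant-flag-curvature conditions for a genuine square metric, matching the known classification and confirming the normalization of the coefficients in (\ref{bewaldcondiononalphaflag}).
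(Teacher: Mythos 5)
Your proposal is correct and follows essentially the same route as the paper: deduce $K=0$ and the conditions on $\rij$, $\sij$ from Theorem \ref{iwneingadad} (the Einstein case), then substitute into the master formula for $\RIkF$ and solve $\RIkF=0$ for $\RIk$, using the commutation identity for the covariant derivatives of $\b$ (the paper's (\ref{yangenga})) to bring the result into the symmetric form (\ref{bewaldcondiononalphaflag}), with sufficiency obtained by running the computation backwards and rebuilding the auxiliary identities from the priori formulae.
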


As an application, we can obtain a classification for a simple case.

Assume $\b$ is conformal with respect to $\a$, then by (\ref{bewaldcondiononbeta}) we have $d=0$ and $\so=0$, and by (\ref{bewaldcondiononbetas}), $\b$ is closed. Moreover, (\ref{woeminaddd'amd}) indicates $b^2\co=c^2\b$, which is equivalent to $(\ln c^2)_{k}=(\ln b^2)_k$. Hence, there exist a non-positive constant $\mu$ such that $c^2=-\mu b^2$. Plugging $c$ into (\ref{bewaldcondiononalpha}) yields $\Ricoo=(n-1)\mu\a^2$, and plugging $c$ into (\ref{bewaldcondiononalphaflag}) yields $\RIk=\mu(\a\dIk-y^iy_k)$. As a result, we have the following classification.
\begin{theorem}
Let $F=\f{(b\a+\b)^2}{\a}$ be a singular square metric on a $n$-dimensional manifold with $n\geq3$. If $\b$ is conformal with respect to $\a$, then $F$ has constant Ricci (resp. flag) curvature if and only if $\a$ has constant Ricci (resp. sectional) curvature $\mu$ and $\b$ is closed with the conformal factor $c(x)$ satisfying $c^2=-\mu b^2$.
\end{theorem}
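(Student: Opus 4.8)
The statement is a specialization of Theorems~\ref{iwneingadad} and~\ref{odnanganndnand}, so the plan is to feed the conformality hypothesis into the necessary-and-sufficient conditions already proved and watch everything collapse to a purely Riemannian statement about $\a$. I would first record that ``$\b$ conformal with respect to $\a$'' means the symmetric part of $\bij$ is pure trace, i.e. $\rij=c\aij$, equivalently $\roo=c\a^2$, for a scalar $c=c(x)$; I will check along the way that this factor is the same $c$ occurring in both theorems.

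For the forward direction I would invoke Theorem~\ref{iwneingadad} (resp.~\ref{odnanganndnand}) to obtain the conditions (\ref{bewaldcondiononalpha})--(\ref{bewaldcondiononbetas}) (resp. (\ref{bewaldcondiononalphaflag})--(\ref{bewaldcondiononbetasijflag})). Since conformality makes $\rij$ a multiple of $\aij$, the tensor $d\bi\bj-\f{3}{b^2}(\bi\sj+\bj\si)$ appearing in the polarized form of (\ref{bewaldcondiononbeta}) (resp. in (\ref{bewaldcondiononbetarijflag})) must itself be a multiple of $\aij$; contracting it with $a^{ij}$ and with $b^ib^j$ and using $\bI\si=0$ (antisymmetry of $\sij$) forces $d=0$ and then $\si=0$, hence $\so=0$. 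Substituting $\si=0$ into (\ref{bewaldcondiononbetas}) (resp. reading off (\ref{bewaldcondiononbetasijflag})) gives $\sIo=0$, so $\sij\equiv0$ and $\b$ is closed with $\bij=c\aij$.

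It then remains to identify $c$. I would use the integrability relation (\ref{woeminaddd'amd}) produced while proving the main theorems, which under $d=0$ and $\sij=0$ reduces to $b^2\ck=c^2\bk$; since $\bij=c\aij$ yields $(b^2)_{|k}=2c\bk$, this is exactly $(\ln c^2)_k=(\ln b^2)_k$, so $c^2/b^2$ is constant and, as $c^2\ge0$, can be written $c^2=-\mu b^2$ with $\mu$ non-positive. Finally I would substitute $d=0$, $\sij\equiv0$ (whence $\so=0$, $\sI=0$, every covariant derivative of $s$ vanishes, and the $s$-quadratic scalar $t$ vanishes) together with $c^2=-\mu b^2$ into (\ref{bewaldcondiononalpha}) (resp. (\ref{bewaldcondiononalphaflag})): every $\b$-, $s$- and $d$-term drops out and the surviving coefficient collapses to $\Ricoo=(n-1)\mu\a^2$ (resp. $\RIk=\mu(\a^2\dIk-\yI\yk)$), which says precisely that $\a$ is Einstein with constant Ricci curvature $\mu$ (resp. has constant sectional curvature $\mu$).

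For the converse I would run these substitutions backwards: if $\a$ has the stated constant curvature and $\b$ is closed and conformal with $c^2=-\mu b^2$, then (\ref{bewaldcondiononbeta})--(\ref{bewaldcondiononbetas}) (resp. (\ref{bewaldcondiononbetarijflag})--(\ref{bewaldcondiononbetasijflag})) hold with $d=0$ and $\so=0$, and (\ref{bewaldcondiononalpha}) (resp. (\ref{bewaldcondiononalphaflag})) reduces to the curvature identity for $\a$, so Theorem~\ref{iwneingadad} (resp.~\ref{odnanganndnand}) returns that $F$ is Einstein and hence Ricci-flat (resp. has constant flag curvature, necessarily $K=0$). I expect the only genuinely delicate point to be the identification of $c$: the whole argument hinges on (\ref{woeminaddd'amd}) reducing cleanly to $b^2\ck=c^2\bk$ and on the sign of the resulting constant. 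Once $d$ and $\sij$ are known to vanish, the substitution into the long formula (\ref{bewaldcondiononalphaflag}) is mere bookkeeping, since every term there beyond the $9b^2c^2(\a^2\dIk-\yI\yk)$ piece carries an explicit factor of $d$, $\so$, $\sI$, a derivative of $s$, or $t$, and therefore vanishes.
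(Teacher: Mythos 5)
Your proposal is correct and follows essentially the same route as the paper: specialize the conditions of Theorems~\ref{iwneingadad} and~\ref{odnanganndnand} under conformality to get $d=0$ and $\si=0$ from (\ref{bewaldcondiononbeta}), closedness from (\ref{bewaldcondiononbetas}), the identification $c^2=-\mu b^2$ from (\ref{woeminaddd'amd}), and then read off $\Ricoo=(n-1)\mu\a^2$ and $\RIk=\mu(\a^2\dIk-\yI\yk)$. Your version merely spells out the contraction argument for $d=0$, $\si=0$ and the converse, both of which the paper leaves implicit.
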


As a non-trivial example, taking
\begin{eqnarray*}
\a=\f{\sqrt{(1+\mu|x|^2)|y|^2-\mu\langle x,y\rangle^2}}{1+\mu|x|^2},\qquad
\b=\f{\lambda\langle x,y\rangle+(1+\mu|x|^2)\langle a,y\rangle-\mu\langle a,x\rangle\langle x,y\rangle}{(1+\mu|x|^2)^\frac{3}{2}}.
\end{eqnarray*}
with the constant number $\lambda$ and constant vector $a$ satisfying $\lambda^2+\mu|a|^2=0$ additionally, then $\a$ is of negative curvature $\mu$ and $\b$ is closed and conformal with the conformal factor $c(x)$ satisfying $c^2=-\mu b^2$. Hence, singular square metric (\ref{singularBerwald}) is of vanishing flag curvature when $n\geq3$. This example was first provided in \cite{yct-zhm-pfga}.

\section{Preliminaries}
The Riemann tensor ${}^FR_{y}=\RIkF\frac{\partial}{\partial x^{i}}\otimes
dx^{k}$ of a Finsler $F(x,y)$ is determined by Berwald's formula as bellow,
\begin{eqnarray*}
\RIkF:=2G^{i}_{x^{k}}-G^{i}_{x^{l}y^{k}}y^{l}+2G^{l}G^{i}_{y^{k}y^{l}}-G^{i}_{y^{l}}G^{l}_{y^{k}},
\end{eqnarray*}
where $G^{i}:=\frac{1}{4}g^{il}\left\{[F^{2}]_{x^{k}y^{l}}y^{k}-[F^{2}]_{x^{l}}\right\}$ are spray coefficients of $F$, in which $(g^{ij})$ is the inverse of the fundamental tensor $g_{ij}:=\frac{1}{2}[F^2]_{y^iy^j}$. $F$ is said to be of constant flag curvature $K$ if $\RIkF=K(F^2\dIk-Fy^iF_{y^k})$.

The Ricci tensor of $F$ due to Akbar-Zadeh is given by
\begin{eqnarray*}
{}^F{Ric}_{ij}:=\left(\f{1}{2}{}^FR^m{}_m\right)_{y^iy^j}.
\end{eqnarray*}
Denote $\RicooF:={}^F{Ric}_{ij}y^iy^j$. $F$ is said to be of constant Ricci curvature $K$ if $\RicooF=(n-1)KF^2$.

Using the Maple program, we can calculate the Riemann tensor and Ricci tensor of singular square metrics. The whole process is elementary and mechanical, so we provide the final results in appendix directly.

Moreover, we need some abbreviations. Let $\a=\sqrt{a_{ij}y^iy^j}$ be a Riemannian metric, $\b=b_iy^i$ be a $1$-form. Denote
\begin{eqnarray*}
\rij:=\f{1}{2}(\bij+b_{j|i}),\quad\sij:=\f{1}{2}(\bij-b_{j|i})
\end{eqnarray*}
be the symmetrization and antisymmetrization of the covariant derivative $\bij$ respectively, then
\begin{eqnarray*}
&r_{i0}:=r_{ij}y^j,\quad r^i{}_0:=a^{im}r_{m0},\quad r_{00}:=r_{i0}y^i,\quad r_i:=b^mr_{mi},\quad r^i:=a^{im}r_{m},\quad r_0:=r_iy^i,\quad r:=r_ib^i,\\
&s_{i0}:=s_{ij}y^j,\quad s^i{}_0:=a^{im}s_{m0},\quad s_i:=b^ms_{mi},\quad s^i:=a^{im}s_{m},\quad s_0:=s_iy^i.
\end{eqnarray*}
Roughly speaking, indices are raised or lowered by $a^{ij}$ or $\aij$, vanished by contracted with $b^i$~(or~$\bi$) and changed to be '${}_0$' by contracted with $y^i$ (or $y_i:=a_{ij}y^j$). At the same time, we need other three tensors
\begin{eqnarray*}
p_{ij}:=r_{im}r^m{}_j,\qquad q_{ij}:=r_{im}s^m{}_j,\qquad t_{ij}:=s_{im}s^m{}_j
\end{eqnarray*}
and the related tensors determined by the above rules. Notice that both $p_{ij}$ and $t_{ij}$ are symmetric, but $q_{ij}$ is neither symmetric nor antisymmetric in general. So $b^jq_{ji}$ and $b^jq_{ij}$, denoted by $q_i$ and $q^\star_i$ respectively, are different. But $b^iq_i$, denoted by $q$, is equal to $b^iq^\star_i$. Finally, in order to avoid ambiguity, sometimes we will use index $b$, which means contracting the corresponding index with $b^i$ or $b_i$. For example, $\rohb:=r_{0|k}b^k$.

Finally, the following priori formulae for $\b$ are required. These formulae show the inner relationship between the covariant derivative of $\sij~(\rij)$ and the Riemann tensor. They hold for any Riemann metric $\a$ and any $1$-form $\b$ without any restriction. The key formula (\ref{relation1}) play a crucial role in the study of Randers metrics \cite{db-robl-onri}.
\begin{lemma}\cite{xy}
\begin{eqnarray}\label{3ingaindngiad}
r_{i|j}+s_{i|j}=r_{j|i}+s_{j|i},
\end{eqnarray}
as a corollary,
\begin{eqnarray}
\sohb=-p_{0}-q_{0}+q^{*}_{0}-t_{0}-\rohb+r_{|0}.\label{relation5}
\end{eqnarray}
\end{lemma}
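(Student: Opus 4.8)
The plan is to establish the two identities in sequence, obtaining (\ref{relation5}) as a contraction of (\ref{3ingaindngiad}). For the first identity the key observation is that $\ri+\si$ is an exact $1$-form. Contracting $\bij=\rij+\sij$ with $b^i$ on the first index and using that $\a$ is parallel, so that raising an index commutes with covariant differentiation and $(b_mb^m)_{|i}=2b^mb_{m|i}$, I get
\begin{eqnarray*}
\ri+\si=b^mb_{m|i}=\f{1}{2}\left(b_mb^m\right)_{|i}=\f{1}{2}\left(b^2\right)_{|i}.
\end{eqnarray*}
Thus $\ri+\si$ is the differential of the scalar $\f{1}{2}b^2$. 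Since the connection is torsion-free, the second covariant derivative of a scalar is symmetric in its two lower indices, so $(\ri+\si)_{|j}=(\rj+\sj)_{|i}$, which is precisely (\ref{3ingaindngiad}).

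To obtain the corollary I would contract (\ref{3ingaindngiad}) with $y^ib^j$. The left-hand side collapses at once to $\rohb+\sohb$. For the right-hand side I would move $b^j$ through the covariant derivative by the Leibniz rule, $(\rj+\sj)_{|i}b^j=\big((\rj+\sj)b^j\big)_{|i}-(\rj+\sj)b^j{}_{|i}$. The scalar $(\rj+\sj)b^j$ simplifies because $\sj b^j=b^mb^js_{mj}=0$ by antisymmetry of $s$, leaving $(\rj+\sj)b^j=\rj b^j=r$; its covariant derivative contracted with $y^i$ is therefore $r_{|0}$.

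The remaining piece $(\rj+\sj)b^j{}_{|i}y^i$ is where all the bookkeeping lives. Writing $b^j{}_{|i}=r^j{}_i+s^j{}_i$ and contracting with $y^i$ produces the four quadratic terms $\rj r^j{}_0$, $\rj s^j{}_0$, $\sj r^j{}_0$ and $\sj s^j{}_0$. Directly from the definitions $p_i=b^mp_{mi}$, $q_i=b^mq_{mi}$, $t_i=b^mt_{mi}$ the first, second and fourth of these are $\po$, $\qo$ and $\to$. The third is the delicate one: since $q_{ij}$ is not symmetric one must track which slot is saturated by $b$, and matching $\sj r^j{}_0=s^kr_{k0}$ against $\qqo=b^jq_{0j}=-r_{0k}s^k$ — where the minus sign arises because $\qqo$ contracts the second index of $q$, equivalently the second index of $s$ with $b$ — yields $\sj r^j{}_0=-\qqo$. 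Collecting the four terms as $\po+\qo-\qqo+\to$ and substituting back gives $\rohb+\sohb=r_{|0}-\po-\qo+\qqo-\to$, which rearranges to (\ref{relation5}). The only genuine obstacle is this last sign-and-slot accounting for the non-symmetric tensor $q$; everything else is a short and mechanical computation.
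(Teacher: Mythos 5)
Your proof is correct. Note that the paper itself offers no proof of this lemma --- it is quoted from the reference [Tang--Yu] --- so there is nothing internal to compare against; your route is the standard one. Both halves check out: $\ri+\si=b^m b_{m|i}=\tfrac12(b^2)_{|i}$ is exact, so the torsion-free symmetry of the Hessian gives (\ref{3ingaindngiad}); and in the contraction with $y^ib^j$ your slot-accounting for the non-symmetric tensor $q$ is right, since $q^*{}_0=b^jq_{0j}=r_{0m}s^m{}_jb^j=-r_{0m}s^m$ indeed forces $\sj r^j{}_0=-\qqo$ while $\rj r^j{}_0=\po$, $\rj s^j{}_0=\qo$, $\sj s^j{}_0=\to$, yielding exactly (\ref{relation5}).
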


\begin{lemma}\cite{xy}
\begin{eqnarray}\label{relation1}
s_{ij|k}=-b^m R_{kmij}+r_{ik|j}-r_{jk|i},
\end{eqnarray}
where $R_{kmij}$ is the fourth-order Riemann tensor determined by $R_{kmij}=\frac{1}{3}\left(\frac{\partial^2R_{mi}}{\partial y^j\partial y^k}-\frac{\partial^2R_{mj}}{\partial y^i\partial y^k}\right)$. As corollaries,
\begin{eqnarray}
\sIkho&=&\RoIkb-\RbIko-\rkohI+\rIohk,\label{relation7}\\
\sIohk&=&\RoIkb+\rIkho-\rkohI,\label{relation8}\\
\sIoho&=&-\RIb+\rIoho-\roohI,\label{relation9}\\
\sIohb&=&-\qIo+\qoI+\rIho-\rohI,\label{relation14}\\
\sIhk&=&-\RbIkb-\pIk-\tIk-\rIkhb+\rkhI,\label{relations10}\\
\skho&=&-\Rbokb-\pko-\tko-\rkohb+\rohk,\label{relation12}\\
\sohk&=&-\Rbokb-\pko-\tko-\rkohb+\rkho,\label{relation13}\\
\soho&=&-\Rbb-p_{00}-t_{00}-r_{00|b}+r_{0|0},\label{relation4}\\
\sIohi&=&\Rico+\rIiho-\rIohi,\label{relation2}\\
\sIhi&=&-\Ric-\pIi-\tIi-\rIihb+\rIhi.\label{relation6}
\end{eqnarray}
\end{lemma}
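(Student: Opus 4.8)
The plan is to derive the master identity (\ref{relation1}) directly from the splitting $\bij=\rij+\sij$ of the covariant derivative of $\b$ into its symmetric and antisymmetric parts, using only the Ricci (commutation) identity and the first Bianchi identity of the Riemann tensor of $\a$; the corollaries (\ref{relation7})--(\ref{relation6}) will then follow by contracting (\ref{relation1}) with $y^i$, $\bi$ and $a^{ij}$ and bookkeeping.

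First I would differentiate the defining relations once more. Since $\sij=\f12(\bij-b_{j|i})$ and $\rij=\f12(\bij+b_{j|i})$, covariant differentiation gives $\sijhk=\f12(b_{i|j|k}-b_{j|i|k})$, together with analogous expressions for $r_{ik|j}$ and $r_{jk|i}$ in terms of the third covariant derivatives $b_{a|b|c}$. The key algebraic observation is that the particular combination on the right of (\ref{relation1}) collapses into three commutators:
\begin{eqnarray*}
\sijhk-r_{ik|j}+r_{jk|i}=\f12\big[(b_{i|j|k}-b_{i|k|j})-(b_{j|i|k}-b_{j|k|i})-(b_{k|i|j}-b_{k|j|i})\big].
\end{eqnarray*}
Each bracketed difference is a second-order commutator of covariant derivatives acting on the $1$-form $\b$, so by the Ricci identity $b_{a|b|c}-b_{a|c|b}=-b_mR^m{}_{abc}$ it equals a single contraction of the Riemann tensor of $\a$ with $b_m$. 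Substituting and invoking the first Bianchi identity $R^m{}_{ijk}+R^m{}_{jki}+R^m{}_{kij}=0$ lets two of the three curvature terms combine, leaving only $b_mR^m{}_{kij}$. Lowering the index and using the antisymmetry $R_{mkij}=-R_{kmij}$ rewrites this as $-b^mR_{kmij}$, which is exactly (\ref{relation1}).

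One point that must be checked is that the object produced by the commutators, the honest Riemann curvature of $\a$, coincides with the fourth-order tensor defined in the statement by $y$-differentiation of $R_{mi}$. For a Riemannian $\a$ one has $R_{mi}=R_{mpiq}y^py^q$, hence $\frac{\partial^2R_{mi}}{\partial y^j\partial y^k}=R_{mjik}+R_{mkij}$, and a short computation using the curvature symmetries and the first Bianchi identity shows that $\f13\big(\frac{\partial^2R_{mi}}{\partial y^j\partial y^k}-\frac{\partial^2R_{mj}}{\partial y^i\partial y^k}\big)$ reduces to $R_{mkij}$, reconciling the two notions.

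Finally, every corollary (\ref{relation7})--(\ref{relation6}) should drop out of (\ref{relation1}) by contracting the free indices with $y^i$ (turning an index into $0$), with $\bi$ (turning it into $b$), or with $a^{ij}$ (raising and tracing). The subtlety that I expect to be the main obstacle is that these contractions do not pass freely through the covariant derivative: whenever a $b^m$ is carried past a derivative one picks up $b^m{}_{|k}=r^m{}_k+s^m{}_k$, and this is precisely what manufactures the quadratic tensors $p_{ij}=r_{im}r^m{}_j$, $q_{ij}=r_{im}s^m{}_j$ and $t_{ij}=s_{im}s^m{}_j$ appearing in (\ref{relations10})--(\ref{relation6}); moreover, mixed derivatives such as $r^i{}_{k|0}$ versus $r^i{}_{0|k}$ differ again by a curvature commutator. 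Tracking these sign- and ordering-sensitive corrections, and matching the various contractions of the fourth-order Riemann tensor against the Finsler quantities $\RIk$, $\Rbb$, $\RbIkb$ and $\Ric$, is the genuinely delicate part; the remainder is linear bookkeeping in the indices.
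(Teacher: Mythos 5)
The paper offers no proof of this lemma at all (it is imported wholesale from the cited Tang--Yu reference), and your route --- writing $s_{ij|k}-r_{ik|j}+r_{jk|i}$ as one half of three Ricci-identity commutators of $b_{a|b|c}$, collapsing them with the first Bianchi identity, and then obtaining the corollaries by contracting with $y^i$, $b^i$ and $a^{ij}$ while collecting the $r^m{}_k+s^m{}_k$ terms that manufacture $p_{ij}$, $q_{ij}$, $t_{ij}$ --- is exactly the standard derivation used there, so the approach is correct. The one thing to nail down before this becomes a proof is the sign convention: as written, your commutator computation ends at $b^mR_{mkij}=-b^mR_{kmij}$ in the classical index ordering, while your reconciliation paragraph identifies the paper's $y$-differentiated tensor $R_{kmij}$ with the classical $R_{mkij}$, and these two statements together flip the sign of the curvature term in (\ref{relation1}); fixing one convention for $b_{a|b|c}-b_{a|c|b}$ and carrying it consistently through both halves removes the discrepancy.
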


\section{Constant Ricci curvature:~necessary and sufficient conditions}
Assume singular square metric (\ref{singularBerwald}) is an Einstein metric with Ricci scalar $K(x)$, namely
\begin{eqnarray*}
\RicooF-(n-1)KF^2=0.
\end{eqnarray*}
Combining with (\ref{yanega;dngadg}) in appendix, the above equality is equivalent to
\begin{eqnarray}\label{pwpeiming}
b\Rat+\a\Irrat=0,
\end{eqnarray}
where $\Rat$ and $\Irrat$ are polynomials on $y$, given by
\begin{eqnarray}
\Rat&=&\mathfrak{R}_1\Ricoo+\mathfrak{R}_2K+\mathfrak{R}_3\roo^2+\mathfrak{R}_4\rooho+\mathfrak{R}_5\roo\ro+\mathfrak{R}_6\roo\so
+\mathfrak{R}_7\roo\rIi+\mathfrak{R}_8\roo r+\mathfrak{R}_9\qoo+\mathfrak{R}_{10}\too\nonumber\\
&&+\mathfrak{R}_{11}\ro^2+\mathfrak{R}_{12}\ro\so
+\mathfrak{R}_{13}\so^2+\mathfrak{R}_{14}\roohb+\mathfrak{R}_{15}\roho+\mathfrak{R}_{16}\soho+\mathfrak{R}_{17}\ro\rIi
+\mathfrak{R}_{18}\so\rIi+\mathfrak{R}_{19}\ro r\nonumber\\
&&+\mathfrak{R}_{20}\so r+\mathfrak{R}_{21}\po+\mathfrak{R}_{22}\qqo+\mathfrak{R}_{23}\qo+\mathfrak{R}_{24}\to+\mathfrak{R}_{25}\sIohi+\mathfrak{R}_{26}\rohb
+\mathfrak{R}_{27}\sohb+\mathfrak{R}_{28}r_{|0}+\mathfrak{R}_{29}r\rIi\nonumber\\
&&+\mathfrak{R}_{30}r^2+\mathfrak{R}_{31}\tIi+\mathfrak{R}_{32}p+\mathfrak{R}_{33}q+\mathfrak{R}_{34}t+\mathfrak{R}_{35}\rIhi
+\mathfrak{R}_{36}\sIhi+\mathfrak{R}_{37}\rhb,\label{RatBerwald}\\
\Irrat&=&\mathfrak{I}_1\Ricoo+\mathfrak{I}_2K+\mathfrak{I}_3\roo^2+\mathfrak{I}_4\rooho+\mathfrak{I}_5\roo\ro+\mathfrak{I}_6\roo\so
+\mathfrak{I}_7\roo\rIi+\mathfrak{I}_8\roo r+\mathfrak{I}_9\qoo+\mathfrak{I}_{10}\too\nonumber\\
&&+\mathfrak{I}_{11}\ro^2+\mathfrak{I}_{12}\ro\so+\mathfrak{I}_{13}\so^2+\mathfrak{I}_{14}\roohb
+\mathfrak{I}_{15}\roho+\mathfrak{I}_{16}\soho+\mathfrak{I}_{17}\ro\rIi+\mathfrak{I}_{18}\so\rIi+\mathfrak{I}_{19}\ro r+\mathfrak{I}_{20}\so\nonumber\\
&& r+\mathfrak{I}_{21}\po+\mathfrak{I}_{22}\qqo+\mathfrak{I}_{23}\qo+\mathfrak{I}_{24}\to+\mathfrak{I}_{25}\sIohi+\mathfrak{I}_{26}\rohb
+\mathfrak{I}_{27}\sohb+\mathfrak{I}_{28}r_{|0}+\mathfrak{I}_{29}r\rIi+\mathfrak{I}_{30}r^2\nonumber\\
&&+\mathfrak{I}_{31}\tIi+\mathfrak{I}_{32}p+\mathfrak{I}_{33}q+\mathfrak{I}_{34}t+\mathfrak{I}_{35}\rIhi+\mathfrak{I}_{36}\sIhi+\mathfrak{I}_{37}\rhb,\label{IrratBerwald}
\end{eqnarray}
where
\begin{eqnarray*}
&\mathfrak{R}_1=9b^2\a^2(b^2\a^2-\b^2)^2(b^2\a^2+\b^2),\quad\mathfrak{R}_2=-9(n-1)b^2(b^2\a^2-\b^2)^4(b^2\a^2+\b^2),\\
&\mathfrak{R}_3=b^2\a^2\{(3n-5)b^4\a^4+(29n-41)b^2\a^2\b^2+10(n-2)\b^4\},\\
&\mathfrak{R}_4=6b^2\a^2\b(b^2\a^2-\b^2)\{(2n-3)b^2\a^2+(n-2)\b^2\},\quad\mathfrak{R}_5=-2b^2\\
&\mathfrak{R}_6=-6(n-3)b^2\a^4\b(b^2\a^2-\b^2),\quad\mathfrak{R}_7=6b^2\a^4(b^2\a^2-\b^2)(b^2\a^2+\b^2),\\
&\mathfrak{R}_8=2b^2\a^6\{(9n-25)b^2\a^2+(33n-41)\b^2\},\quad\mathfrak{R}_9=12b^2\a^4(b^2\a^2-\b^2)\{(2n-1)b^2\a^2+(4n-7)\b^2\},\\
&\mathfrak{R}_{10}=-36b^2\a^4(b^2\a^2-\b^2)(b^2\a^2-3\b^2),\quad\mathfrak{R}_{11}=\a^4\{2(21n-64)b^4\a^4+(145n-143)b^2\a^2\b^2-9(n-1)\b^4\},\\
&\mathfrak{R}_{12}=6\a^4\{2(n-14)b^4\a^4+(11n+27)b^2\a^2\b^2-3(n-1)\b^4\},\\
&\mathfrak{R}_{13}=9\a^4\{2(n-8)b^4\a^4+(9n+17)b^2\a^2\b^2-(n-1)\b^4\},\quad\mathfrak{R}_{14}=6b^2\a^4(b^2\a^2-\b^2)(b^2\a^2+\b^2),\\
&\mathfrak{R}_{15}=-6b^2\a^4(b^2\a^2-\b^2)\{(2n-5)b^2\a^2+(4n-5)\b^2\},\quad\mathfrak{R}_{16}=18b^2\a^4(b^2\a^2-\b^2)^2,\\
&\mathfrak{R}_{17}=-24b^2\a^6\b(b^2\a^2-\b^2),\quad\mathfrak{R}_{18}=0,\quad\mathfrak{R}_{19}=-4\a^6\b\{(41n-101)b^2\a^2+6(n+6)\b^2\},\\
&\mathfrak{R}_{20}=-12\a^6\b\{(3n-11)b^2\a^2+2(n+6)\b^2\},\quad\mathfrak{R}_{21}=36(n-1)b^2\a^6\b(b^2\a^2-\b^2),\\
&\mathfrak{R}_{22}=-12(3n-5)b^2\a^6\b(b^2\a^2-\b^2),\quad\mathfrak{R}_{23}=-12(5n-11)b^2\a^6\b(b^2\a^2-\b^2),\\
&\mathfrak{R}_{24}=36(n-1)b^2\a^6\b(b^2\a^2-\b^2),\quad\mathfrak{R}_{25}=-36b^2\a^4\b(b^2\a^2-\b^2)^2,
\quad\mathfrak{R}_{26}=-24b^2\a^6\b(b^2\a^2-\b^2),\\
&\mathfrak{R}_{27}=0,\quad\mathfrak{R}_{28}=6(3n-5)b^2\a^6\b(b^2\a^2-\b^2),\quad\mathfrak{R}_{29}=12b^2\a^8(b^2\a^2-\b^2),\\
&\mathfrak{R}_{30}=2\a^8\{(12n-55)b^2\a^2+(14n+23)\b^2\},\quad\mathfrak{R}_{31}=-36b^2\a^6(b^2\a^2-\b^2)^2,\\
&\mathfrak{R}_{32}=-6\a^6(b^2\a^2-\b^2)\{(4n-17)b^2\a^2+3(n+4)\b^2\},\\
&\mathfrak{R}_{33}=12\a^6(b^2\a^2-\b^2)\{(2n-17)b^2\a^2+3(n+4)\b^2\},\quad\mathfrak{R}_{34}=-18\a^6(b^2\a^2-\b^2)\{3b^2\a^2-(n+4)\b^2\},\\
&\mathfrak{R}_{35}=-18b^2\a^6(b^2\a^2-\b^2)^2,\quad\mathfrak{R}_{36}=-18b^2\a^6(b^2\a^2-\b^2)^2,\quad
\mathfrak{R}_{37}=12b^2\a^8(b^2\a^2-\b^2),\\
&\mathfrak{I}_1=-18b^4\a^2\b(b^2\a^2-\b^2)^2,\quad\mathfrak{I}_2=-18(n-1)b^4\b(b^2\a^2-\b^2)^4,\\
&\mathfrak{I}_3=-2b^4\a^2\b\{(7n-9)b^2\a^2+2(7n-12)\b^2\},\quad\mathfrak{I}_4=-3b^4\a^2(b^2\a^2-\b^2)\{(n-1)b^2\a^2+(5n-9)\b^2\},\\
&\mathfrak{I}_5=2b^2\a^2\{(9n-5)b^4\a^4+2(33n-59)b^2\a^2\b^2+9(n-1)\b^4\},\\
&\mathfrak{I}_6=-6b^2\a^2(b^2\a^2-\b^2)\{(n-5)b^2\a^2+3(n-1)\b^2\},\quad\mathfrak{I}_7=-12b^4\a^4\b(b^2\a^2-\b^2),\\
&\mathfrak{I}_8=-2b^2\a^4\b\{(25n-47)b^2\a^2+(17n-19)\b^2\},\quad\mathfrak{I}_9=-24(3n-4)b^4\a^4\b(b^2\a^2-\b^2),\\
&\mathfrak{I}_{10}=72b^4\a^4\b(b^2\a^2-\b^2),\quad\mathfrak{I}_{11}=-\a^2\b\{(95n-167)b^4\a^4+36(2n-3)b^2\a^2\b^2-9(n-1)\b^4\},\\
&\mathfrak{I}_{12}=6\a^2\b\{(19n-43)b^4\a^4-12(n-4)b^2\a^2\b^2+3(n-1)\b^4\},\\
&\mathfrak{I}_{13}=9\a^2\b\{(9n-17)b^4\a^4+20b^2\a^2\b^2+(n-1)\b^4\},\quad\mathfrak{I}_{14}=-12b^4\a^4\b(b^2\a^2-\b^2),\\
&\mathfrak{I}_{15}=3b^2\a^2\b(b^2\a^2-\b^2)\{(9n-17)b^2\a^2+3(n-1)\b^2\},\quad\mathfrak{I}_{16}=-9(n-1)b^2\a^2\b(b^2\a^2-\b^2)^2,\\
&\mathfrak{I}_{17}=6b^2\a^4(b^2\a^2-\b^2)(b^2\a^2+3\b^2),\quad\mathfrak{I}_{18}=-18b^2\a^4(b^2\a^2-\b^2)^2,\\
&\mathfrak{I}_{19}=2\a^4\{(18n-31)b^4\a^4+62(n-2)b^2\a^2\b^2-3(2n-7)\b^4\}),\\
&\mathfrak{I}_{20}=-6\a^4\{(6n-35)b^4\a^4+2(n+22)b^2\a^2\b^2+(2n-7)\b^4\},\\
&\mathfrak{I}_{21}=-6b^2\a^4(b^2\a^2-\b^2)\{(2n-1)b^2\a^2+(4n-5)\b^2\},\\
&\mathfrak{I}_{22}=6b^2\a^4(b^2\a^2-\b^2)\{(2n-5)b^2\a^2+(4n-5)\b^2\},\\
&\mathfrak{I}_{23}=6b^2\a^4(b^2\a^2-\b^2)\{(8n-37)b^2\a^2+3(2n+9)\b^2\},\quad\mathfrak{I}_{24}=-18b^2\a^4(b^2\a^2-\b^2)\{11b^2\a^2-(2n+9)\b^2\},\\
&\mathfrak{I}_{25}=36b^4\a^4(b^2\a^2-\b^2)^2,\quad\mathfrak{I}_{26}=6b^2\a^4(b^2\a^2-\b^2)(b^2\a^2+3\b^2),\quad
\mathfrak{I}_{27}=-18b^2\a^4(b^2\a^2-\b^2)^2,\\
&\mathfrak{I}_{28}=-6b^2\a^4(b^2\a^2-\b^2)\{(n-1)b^2\a^2+2(n-2)\b^2\},\quad\mathfrak{I}_{29}=-12b^2\a^6\b(b^2\a^2-\b^2),\\
&\mathfrak{I}_{30}=-4\a^6\b\{4(2n-5)b^2\a^2+3\b^2\},\quad\mathfrak{I}_{31}=0,\quad
\mathfrak{I}_{32}=6\a^4\b(b^2\a^2-\b^2)\{5(n-2)b^2\a^2+3\b^2\},\\
&\mathfrak{I}_{33}=-12\a^4\b(b^2\a^2-\b^2)\{(n-8)b^2\a^2+3\b^2\},\quad\mathfrak{I}_{34}=18\a^4\b(b^2\a^2-\b^2)\{(n+2)b^2\a^2-\b^2\},\\
&\mathfrak{I}_{35}=18b^2\a^4\b(b^2\a^2-\b^2)^2,\quad\mathfrak{I}_{36}=18b^2\a^4\b(b^2\a^2-\b^2)^2,\quad
\mathfrak{I}_{37}=-12b^2\a^6\b(b^2\a^2-\b^2).
\end{eqnarray*}

\begin{lemma}
Singular square metric (\ref{singularBerwald}) has scalar Ricci curvature $K(x)$ if and only if
\begin{eqnarray}\label{soiemgienignieng}
\Rat=0,\qquad\Irrat=0.
\end{eqnarray}
\end{lemma}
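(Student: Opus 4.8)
The plan is to reduce the whole statement to a rationality argument in $y$. The discussion preceding the lemma already establishes, via the appendix identity, that $F$ has scalar Ricci curvature $K(x)$ if and only if $b\Rat+\a\Irrat=0$ holds identically, that is, (\ref{pwpeiming}). So it suffices to prove the purely algebraic equivalence $b\Rat+\a\Irrat=0\iff\Rat=0$ and $\Irrat=0$. The implication ``$\Leftarrow$'' is trivial, and the entire content lies in ``$\Rightarrow$''.

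First I would record the structural fact that both $\Rat$ and $\Irrat$ are \emph{polynomials in $y$}. In (\ref{RatBerwald})-(\ref{IrratBerwald}) each coefficient $\mathfrak{R}_i,\mathfrak{I}_i$ is a polynomial in $\a^2=\aij y^iy^j$, in $\b=\bi y^i$, and in the scalar $b=b(x)$, hence a polynomial in $y$; and each factor it multiplies --- $\Ricoo,\roo,\rooho,\so,\qoo,\too,\sohb,\dots$ --- is a contraction of covariant tensors against $y^i$ or $y_i=\aij y^j$, hence again a polynomial in $y$. In particular the square root $\a$ enters $\Rat$ and $\Irrat$ only through the even power $\a^2$; neither expression contains an odd power of $\a$. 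This parity bookkeeping is the one point I would spell out carefully, rather than rederive the appendix formula.

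Granting this, the forward implication is the standard splitting into rational and irrational parts. Since $b=b(x)$ is independent of $y$, the summand $b\Rat$ is a polynomial in $y$ while $\a\Irrat$ equals $\a$ times a polynomial in $y$. Because $\a^2$ is a positive-definite, hence rank-$n$ (with $n\geq3$), quadratic form, it is not the square of a linear form and therefore not a perfect square in $\RR[y]$; thus $\a=\sqrt{\aij y^iy^j}$ is irrational and $\{1,\a\}$ is linearly independent over the field $\RR(y)$. The identity $b\Rat+\a\Irrat=0$ then forces its rational part $b\Rat$ and the coefficient $\Irrat$ of $\a$ to vanish separately. As $b=\|\b\|_\a>0$ we conclude $\Rat=0$ and $\Irrat=0$, which together with the trivial converse proves the lemma. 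The only genuine obstacle is clerical: confirming that every term listed in (\ref{RatBerwald})-(\ref{IrratBerwald}) indeed has the claimed $y$-polynomial form, so that the clean separation into $b\Rat$ and $\a\Irrat$ is legitimate.
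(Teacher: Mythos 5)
Your proof is correct and is essentially the paper's own argument: both rely on the observation that $\Rat$ and $\Irrat$ are rational (indeed polynomial) in $y$ while $\a$ is irrational, so the identity $b\Rat+\a\Irrat=0$ splits into the two separate equations. You merely spell out the parity bookkeeping and the linear independence of $\{1,\a\}$ over $\RR(y)$ more explicitly than the paper's one-line proof does.
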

\begin{proof}
Notice that both $\Rat$ and $\Irrat$ are rational functions of $y$, but $\a$ is irrational. Hence, (\ref{pwpeiming}) holds if and only if (\ref{soiemgienignieng}) holds.
\end{proof}

\begin{lemma}\label{adnaindnignag}
There are two functions $c(x)$, $d(x)$ and a $1$-form $\theta=\theta_i(x)y^i$ with is orthogonal to $\b$~($b^i\theta_i=0$), such that
\begin{eqnarray}\label{eiweimgpeomm}
\roo=c(x)\a^2+d(x)\b^2+2\b\theta.
\end{eqnarray}
\end{lemma}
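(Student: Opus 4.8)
The plan is to first recast the conclusion into a form that is easy to target. Write $h_{ij}:=\aij-b^{-2}\bi\bj$ for the projection tensor onto the $\a$-orthogonal complement of $\b$. A direct computation shows that (\ref{eiweimgpeomm}) holds for suitable $c,d$ and a $\b$-orthogonal $\theta$ if and only if
\[
h_i{}^k h_j{}^l r_{kl}=c\,h_{ij}
\]
for a single scalar $c=c(x)$; that is, the restriction of $\rij$ to the hyperplane $\{\b=0\}$ is pointwise conformal to $\aij$. Equivalently, as polynomials in $y$, this is the divisibility $\b\mid(\roo-c\a^2)$, since $d\b^2+2\b\theta=\b(d\b+2\theta)$. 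Granting this, $d$ and the orthogonal $1$-form $\theta_i=b^{-2}(r_i-b^{-2}r\,\bi)$ (with $r=r_ib^i$) are recovered by pure algebra, and $b^i\theta_i=0$ is automatic. So the whole task reduces to producing one scalar $c$ with $\roo\equiv c\a^2\pmod{\b}$.

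The only inputs are the two polynomial identities $\Rat=0$ and $\Irrat=0$ of the previous lemma, which hold precisely because $\a$ is irrational. I would exploit that $\a^2=\aij y^iy^j$ is an irreducible polynomial in $y$ and repeatedly divide identities by $\a^2$ and reduce modulo $\a^2$. The hypothesis $n\geq3$ enters here: it keeps $\a^2$ irreducible even after restriction to $\{\b=0\}$, a quadratic in $n-1\geq2$ variables, which is what legitimizes testing divisibility by $\b$ against the $\a^2$-reductions. The first pass is clean, since every coefficient $\mathfrak{I}_j$ except $\mathfrak{I}_2$ carries a factor $\a^2$, whereas $\mathfrak{I}_2\equiv-18(n-1)b^4\b^9\pmod{\a^2}$; hence $\Irrat=0$ forces $K=0$. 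This is the Ricci-flatness of Theorem 1.1, and it both deletes the $K$-terms and makes $\Rat$ and $\Irrat$ divisible by $\a^2$.

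Next I would divide by $\a^2$ and reduce modulo $\a^2$ once more, generating successively lower-order identities. To make them usable I would first substitute the priori relations (\ref{relation5}) and (\ref{relation7})--(\ref{relation6}), which trade the awkward second covariant derivatives of $\sij$ (the quantities $\soho$, $\sohb$, $\sIohi$ and their companions) for the Riemann curvature and first derivatives of $\rij$, rewriting everything over a fixed, $\a$-reduced list of independent invariants. Sorting the resulting relations by the power of $\b$ then separates the curvature and derivative contributions from the part that is purely algebraic in $(\rij,\sij)$, and collecting the top-$\b$-degree piece of that algebraic part is exactly what detects whether $\roo-c\a^2$ is a multiple of $\b$. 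The antisymmetry of $\sij$ gives $b^is_i=0$ for free, so the $1$-form $\theta$ produced this way is automatically $\b$-orthogonal (its precise value, ultimately $\theta=-3b^{-2}\so$ as in (\ref{bewaldcondiononbeta}), is pinned down only later).

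The main obstacle is disentanglement. A single identity such as $\Irrat=0$ mixes the Ricci curvature, third-order terms like $\rooho$, and the pointwise algebraic invariants on an equal footing, while the priori relations feed $s$-derivatives back into curvature; consequently no naive shortcut isolates $\rij$. For instance, simply setting $\b=0$ annihilates every coefficient that survives a single $\a^2$-reduction and therefore yields nothing, which is why the modular peeling combined with the relations of Lemmas 2.1--2.2 is unavoidable. The real work lies in the bookkeeping that splits the reduced identities into genuinely independent pieces and shows that the algebraic piece collapses to exactly the conformal-on-$\{\b=0\}$ condition of the first paragraph; I expect this step, rather than the reformulation, to consume essentially all the effort.
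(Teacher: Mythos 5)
Your reformulation of the conclusion is correct and is in fact exactly what the paper proves: the quantity $b^4\roo-2b^2\b\ro+\b^2 r$ is precisely $b^4\,h_i{}^kh_j{}^l r_{kl}\,y^iy^j$, and the lemma amounts to showing it is proportional to $b^2\a^2-\b^2=b^2h_{ij}y^iy^j$. Your side observation that $\Irrat\equiv-18(n-1)Kb^4\b^9\pmod{\a^2}$ forces $K=0$ is also valid (and is essentially the paper's Lemma 3.3, obtained there only after this lemma). But the core of the proof is missing. The paper does not reach the conclusion by iterated reduction modulo $\a^2$ combined with the priori relations of Lemmas 2.1--2.2 (those relations are not used at this stage at all); it forms the one specific combination
\begin{eqnarray*}
b^4(b^2\Rat-\b\Irrat)=(b^2\a^2-\b^2)\cdot\textrm{Poly}_1(y)+(b^4\roo-2b^2\b\ro+\b^2 r)^2\cdot\textrm{Poly}_2(y),
\end{eqnarray*}
in which every curvature and derivative term is absorbed into the multiple of $b^2\a^2-\b^2$ and the purely algebraic remainder appears as a \emph{perfect square}. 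Since $\textrm{Poly}_2$ is not divisible by $b^2\a^2-\b^2$, and since for $n\geq3$ the quadratic $b^2\a^2-\b^2$ has rank $n-1\geq2$ and is therefore irreducible, one concludes first that $b^2\a^2-\b^2$ divides the square and then that it divides $b^4\roo-2b^2\b\ro+\b^2 r$ itself. Your proposal contains neither the identification of such a combination nor the perfect-square structure, and these are exactly what make the divisibility argument close; note also that the irreducible quadratic doing the work is $b^2\a^2-\b^2$, not $\a^2$, and the divisibility extracted is by $b^2\a^2-\b^2$ rather than by $\b$.

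Your own text concedes the point: after describing the plan you write that the ``real work'' of showing the reduced identities collapse to the conformal condition is expected to ``consume essentially all the effort,'' i.e.\ it is not carried out. There is no argument that the mod-$\a^2$ peeling, even after substituting (\ref{relation5}) and (\ref{relation7})--(\ref{relation6}), terminates in a relation involving only the algebraic invariants of $\rij$; a single generic reduction mixes $\ro^2$, $\ro\so$, $\so^2$, $r$, $\too$, $t$ and the second-order terms in a way that does not visibly isolate $b^4\roo-2b^2\b\ro+\b^2 r$, let alone its square. As it stands the proposal is a correct restatement of the target plus a plausible but unexecuted search strategy, so I would count the decisive step of the lemma as a genuine gap.
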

\begin{proof}
One can verify that
\begin{eqnarray}\label{nangnnad}
b^4(b^2\Rat-\b\Irrat)
&=&(b^2\a^2-\b^2)\cdot\textrm{Poly}_1(y)+(b^4r_{00}-2b^2\b r_0+\b^2r)^2\cdot\textrm{Poly}_2(y)=0,
\end{eqnarray}
where
\begin{eqnarray*}
\textrm{Poly}_1(y)&=&9b^8\a^2(b^2\a^2-\b^2)(b^2\a^2+3\b^2)\Ricoo-3b^6\a^2\big\{2(2n-5)b^4\a^4+(17n-27)b^2\a^2\b^2
\nonumber\\
&&+3(n-1)\b^4\big\}\roho+9b^6\a^2(b^2\a^2-\b^2)\big\{2b^2\a^2+(n-1)\b^2\big\}\soho-72b^8\a^4(b^2\a^2-\b^2)\b\sIohi,\\
\textrm{Poly}_2(y)&=&\a^2\big\{(3n-5)b^4\a^4+(43n-59)b^2\a^2\b^2+2(19n-34)\b^4\big\}.
\end{eqnarray*}

(\ref{nangnnad}) indicates that the polynomial $(b^4r_{00}-2b^2\b r_0+\b^2r)^2\cdot\textrm{Poly}_2(y)$ must be divided exactly by the factor $b^2\a^2-\b^2$. Notice that
\begin{eqnarray*}
\textrm{Poly}_2(y)=\f{1}{b^2}{(b^2\a^2-\b^2)\left\{(3n-5)b^4\a^4+2(23n-32)b^2\a^2\b^2+12(7n-11)\b^4\right\}}+\f{12}{b^2}(7n-11)\b^6,
\end{eqnarray*}
so $\textrm{Poly}_2(y)$ can't be divided exactly by $b^2\a^2-\b^2$. Hence, $(b^4r_{00}-2b^2\b r_0+\b^2r)^2$ must be divided exactly by $b^2\a^2-\b^2$.

When dimension $n\geq3$, as a positive semi-definite quadratic form $b^2\a^2-\b^2$, the rank of its corresponding matrix $(b^2\aij-\bi\bj)$ is $n-1$, which indicates that $b^2\a^2-\b^2$ is irreducible. As a result, $b^4r_{00}-2b^2\b r_0+\b^2r$ must be divided exactly by $b^2\a^2-\b^2$.

When $n=2$, there is a linear function $\gamma\neq0$ such that $b^2\a^2-\b^2=\gamma^2$. As a result, $b^4r_{00}-2b^2\b r_0+\b^2r$ must be divided exactly by $\gamma$. Hence, there is a linear function $\delta$ such that $b^4r_{00}-2b^2\b r_0+\b^2r=\gamma\delta$, i.e.,
\begin{eqnarray}\label{sinwengaaddag}
b^4\rij-b^2(\bi\rj+\bj\ri)+r\bi\bj=\frac{1}{2}(\gamma_i\delta_j+\gamma_j\delta_i).
\end{eqnarray}
By the fact $(b^2\aij-\bi\bj)b^i=0$ we know that $\gamma$ must be orthogonal to $\b$, namely $\gamma_ib^i=0$. Contracting (\ref{sinwengaaddag}) with $b^i$, we can see that $\delta$ is orthogonal to  $\b$ too. Hence, $\gamma$ must parallel to $\delta$ due to the restriction of dimension. So
\begin{eqnarray*}
b^4r_{00}-2b^2\b r_0+\b^2r=\gamma\delta\varpropto\gamma^2=b^2\a^2-\b^2,
\end{eqnarray*}

Summarizing the above arguments, the equality (\ref{nangnnad}) indicates that there is a function $c(x)$, such that
\begin{eqnarray*}
b^4r_{00}-2b^2\b r_0+\b^2r=c(x)b^2(b^2\a^2-\b^2).
\end{eqnarray*}
Further, the above equality indicates that there are a function $d(x)$ and a $1$-form $\theta$, such that
\begin{eqnarray}\label{eiweimgpeomm2}
\roo=c(x)\a^2+d(x)\b^2+2\b\theta.
\end{eqnarray}
Moreover, if $\theta$ is not orthogonal to $\b$, one can replace $\theta$ as
\begin{eqnarray*}
\bar\theta=\theta-\frac{\theta^ib_i}{b^2}\b,
\end{eqnarray*}
in this case, (\ref{eiweimgpeomm2}) reads
\begin{eqnarray*}
\roo=\bar c(x)\a^2+\bar d(x)\b^2+2\b\bar\theta,
\end{eqnarray*}
where
\begin{eqnarray*}
\bar c=c,\qquad\bar d=d+\frac{2\theta^ib_i}{b^2},
\end{eqnarray*}
and meanwhile ${\bar\theta}^ib_i=0$. That is to say, we can always assume that the $1$-form $\theta$ in (\ref{eiweimgpeomm2}) is orthogonal to $\b$.
\end{proof}

\begin{lemma}\label{wingangdlnand}
\begin{eqnarray*}
K=0.
\end{eqnarray*}
\end{lemma}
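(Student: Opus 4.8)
The plan is to go back to the pair of polynomial identities $\Rat=0$ and $\Irrat=0$ from (\ref{soiemgienignieng}) and to notice that the Ricci scalar $K$ enters them through exactly two terms, $\mathfrak{R}_2K$ and $\mathfrak{I}_2K$, whose coefficients
\[
\mathfrak{R}_2=-9(n-1)b^2(b^2\a^2-\b^2)^4(b^2\a^2+\b^2),\qquad
\mathfrak{I}_2=-18(n-1)b^4\b(b^2\a^2-\b^2)^4
\]
are the only ones carrying the factor $(b^2\a^2-\b^2)^4$; inspection of (\ref{RatBerwald})--(\ref{IrratBerwald}) shows that every other $\mathfrak{R}_i$ and $\mathfrak{I}_i$ is divisible by $b^2\a^2-\b^2$ to order at most two. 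The idea is that this gap in the order of vanishing along the quadric $b^2\a^2-\b^2=0$, combined with the irreducibility of that quadric when $n\ge3$, should isolate $K$ and force it to vanish.

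First I would insert the form (\ref{eiweimgpeomm}) of $\roo$ obtained in Lemma \ref{adnaindnignag} into $\Rat=0$, and use the priori formulae of Lemma 2.2, e.g.\ (\ref{relation2}) and (\ref{relation4}), to trade the curvature-type quantities $\Ricoo$, $\soho$, $\sIohi$ for $\a$-Riemann curvature and covariant derivatives of $\b$. At the same time I would use the identity (\ref{nangnnad}): since $b^2\a^2-\b^2\neq0$ it can be divided by that factor, producing a relation among $\Ricoo$, $\roho$, $\soho$, $\sIohi$ and $c$ in which $K$ does not appear. Eliminating the curvature combination with this $K$-free relation, the equation $\Rat=0$ should collapse to the shape $\Psi+\mathfrak{R}_2K=0$, where $\Psi$ is a polynomial in $y$ built only from the source data $c,d,\theta,\so,r,\dots$ and from $\a$-curvature.

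The decisive step is then algebraic. For $n\ge3$ the matrix $b^2\aij-\bi\bj$ has rank $n-1\ge2$, so $b^2\a^2-\b^2$ is an irreducible polynomial in $y$; moreover $b^2\a^2+\b^2$ is coprime to it, since on the quadric $b^2\a^2=\b^2$ one has $b^2\a^2+\b^2=2\b^2\not\equiv0$. Hence, if $K\neq0$, the term $\mathfrak{R}_2K$ vanishes along $b^2\a^2-\b^2=0$ to order exactly four. Granting that $\Psi$ vanishes there to order at most three, the sum $\Psi+\mathfrak{R}_2K$ would vanish to order at most three and could not be identically zero, a contradiction. Therefore $(n-1)K=0$, and since $n\ge3$ this gives $K=0$.

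The main obstacle is precisely the order count for $\Psi$: one must check that after the substitutions none of the genuine source terms --- for instance $\roo^2$, $\so^2$, or the $\a$-Ricci contribution $\mathfrak{R}_1\Ricoo$ --- rebuilds a factor $(b^2\a^2-\b^2)^4$, so that $\Psi$ really stays below order four. This amounts to tracking, term by term, how the structural identity (\ref{eiweimgpeomm}), the priori formulae and the divided form of (\ref{nangnnad}) raise or lower the order of vanishing along the quadric; it is the same irreducibility mechanism that already drove Lemma \ref{adnaindnignag}, but the bookkeeping over the thirty-odd coefficients in (\ref{RatBerwald}) is where the real labour lies.
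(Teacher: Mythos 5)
Your proposal does not close. The decisive step --- ``granting that $\Psi$ vanishes along $b^2\a^2-\b^2=0$ to order at most three'' --- is exactly the assertion that cannot be granted, and no amount of term-by-term bookkeeping over the coefficients $\mathfrak{R}_i$ will establish it. The order of vanishing of a sum is only bounded below by the minimum of the orders of its terms; leading contributions can cancel, so knowing that each product $\mathfrak{R}_iX_i$ separately vanishes to order at most three says nothing about the order of $\Psi=\sum_i\mathfrak{R}_iX_i$. Worse, the quantities $X_i$ are not fixed source data: $\Ricoo$ (the Ricci curvature of $\a$) is a free unknown that the theorem ultimately \emph{solves for}, cf.\ (\ref{bewaldcondiononalpha}), and the quadratic forms $\roo$, $\soho$, etc.\ may themselves contain multiples of $b^2\a^2-\b^2$. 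So if $K\neq0$ the equations could in principle be satisfied with $\Psi=-\mathfrak{R}_2K$ vanishing to order exactly four along the quadric; your argument produces no contradiction.

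The paper isolates $K$ by a different and genuinely effective divisibility. After substituting the structural form (\ref{eiweimgpeomm}) of $\roo$ and the derived quantities (\ref{downieng}) into $\Irrat$ (the odd part, not $\Rat$), every term except the one carrying $K$ acquires an explicit overall factor $\a^2$, yielding $\Irrat=\a^2\cdot\mathrm{Poly}_3(y)-18(n-1)Kb^4\b^9=0$. Since $\a^2$ is an irreducible quadratic form and $\b$ is a nonzero linear form, $\a^2$ cannot divide $\b^9$, whence $K=0$. The relevant quadric is $\a=0$, not $b^2\a^2-\b^2=0$, and the conclusion rests on an exact computation exhibiting the factor $\a^2$, not on an order-of-vanishing estimate. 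To salvage your route you would have to carry out that same explicit substitution, at which point you are reproducing the paper's argument.
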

\begin{proof}
By (\ref{eiweimgpeomm}), we have
\begin{eqnarray}
\ro&=&(c+b^2d)\b+b^2\theta,\nonumber\\
r&=&(c+b^2d)b^2,\nonumber\\
\rIi&=&nc+b^2d,\nonumber\\
\rooho&=&\co\a^2+d_0\b^2+2d\b\roo+2\theta\roo+2\b\theta_{|0},\nonumber\\
\roohb&=&\cb\a^2+\db\b^2+2d\b(\ro-\so)+2\theta(\ro-\so)+2\b\theta_{|b},\nonumber\\
\rIiho&=&n\co+b^2d_0+2d(\ro+\so),\nonumber\\
\rIihb&=&n\cb+b^2\db+2dr,\nonumber\\
\rIohi&=&\co+\db\b+d(\ro-\so)+\theta_{|b}+(d\b+\theta)\rIi+c\theta+\b\theta^i\theta_i-\theta^is_{i0}+\b\theta^i{}_{|i},\nonumber\\
\roho&=&\big\{\co+b^2d_0+2d(\ro+\so)\big\}\b+(c+b^2d)\roo+2\theta(\ro+\so)+b^2\theta_{|0},\nonumber\\
\rohb&=&\big\{\cb+b^2\db+2dr\big\}\b+(c+b^2d)(\ro-\so)+2\theta r+b^2\theta_{|b},\nonumber\\
\rIhi&=&\big\{\cb+b^2\db+2dr\big\}+(c+b^2d)\rIi+2\big(b^2\theta^i\theta_i+\theta^is_i\big)+b^2\theta^i{}_{|i},\nonumber\\
r_{|0}&=&b^2\big\{\co+b^2d_0+2d(\ro+\so)\big\}+2(c+b^2d)(\ro+\so),\nonumber\\
r_{|b}&=&b^2(\cb+b^2\db+2dr)+2(c+b^2d)r,\nonumber\\
\poo&=&c^2\a^2+b^2(d\b+\theta)^2+\b^2\theta^i\theta_i+2c(d\b+\theta)\b+2c\b\theta,\nonumber\\
\po&=&c(c+b^2d)\b+b^2c\theta+b^2(c+b^2d)(d\b+\theta)+b^2\b\theta^i\theta_i,\nonumber\\
p&=&b^2(c+b^2d)^2+b^4\theta^i\theta_i,\nonumber\\
\pIi&=&nc^2+b^2(b^2d^2+\theta^i\theta_i)+b^2\theta^i\theta_i+2cb^2d,\nonumber\\
\qoo&=&(d\b+\theta)\so+\b\theta^is_{i0},\nonumber\\
\qqo&=&-c\so-\b\theta^is_i,\nonumber\\
\qo&=&(c+b^2d)\so+b^2\theta^is_{i0},\nonumber\\
q&=&-b^2\theta^is_i.\label{downieng}
\end{eqnarray}
Plugging (\ref{eiweimgpeomm}) and (\ref{downieng}) into (\ref{IrratBerwald}) yields
\begin{eqnarray}
\Irrat=\a^2\cdot\textrm{Poly}_3(y)-18(n-1)Kb^4\b^9=0,\label{cniangengak}
\end{eqnarray}
where
\begin{eqnarray}
\textrm{Poly}_3(y)&=&-18(n-1)Kb^6(b^2\a^2-2\b^2)(b^4\a^4-2b^2\a^2\b^2+2\b^4)\b-18b^4(b^2\a^2-\b^2)^2\b\Ricoo
-9(n-1)\nonumber\\
&&\cdot(b^2\a^2-\b^2)^3\b c^2+9(n-1)b^2(b^2\a^2-\b^2)^2(b^2\a^2+\b^2)\b cd-2(n-3)b^4(b^2\a^2-\b^2)^3\b d^2\nonumber\\
&&-9(n-1)b^2(b^2\a^2-\b^2)^2(b^2\a^2+\b^2)\co-6b^4(b^2\a^2-\b^2)^2\big\{(n-1)b^2\a^2-(n-3)\b^2\big\}d_{0}\nonumber\\
&&+12b^4\a^2(b^2\a^2-\b^2)^2\b\db-18(b^2\a^2-\b^2)^2\big\{(2n-3)b^2\a^2-(n-1)\b^2\big\}c\so-6b^2(b^2\a^2-\b^2)^2\nonumber\\
&&\cdot\big\{2(n+1)b^2\a^2-3(n-1)\b^2\big\}d\so+9\big\{(9n-17)b^4\a^4+20b^2\a^2\b^2+(n-1)\b^4\big\}\b\so^2\nonumber\\
&&+36b^4\a^2(b^2\a^2-\b^2)^2\sIohi-9(n-1)b^2(b^2\a^2-\b^2)^2\b\soho-18b^2\a^2(b^2\a^2-\b^2)^2\sohb\nonumber\\
&&+18b^2\a^2(b^2\a^2-\b^2)^2\b\sIhi+72b^4\a^2(b^2\a^2-\b^2)\b\too-18b^2\a^2(b^2\a^2-\b^2)\big\{11b^2\a^2\nonumber\\
&&-(2n+9)\b^2\big\}\to+18\a^2(b^2\a^2-\b^2)\big\{(n+2)b^2\a^2-\b^2\big\}\b t+6b^2(b^2\a^2-\b^2)^2\big\{(3n-4)b^2\a^2\nonumber\\
&&+3(n-1)\b^2\big\}c\theta-4b^4(b^2\a^2-\b^2)^2\big\{2b^2\a^2+(n-3)\b^2\big\}d\theta+12b^2\big\{(7n-15)b^4\a^4\nonumber\\
&&-(5n-19)b^2\a^2\b^2
+3(n-1)\b^4\big\}\b\so\theta-b^4\big\{(17n-33)b^4\a^4-4(13n-22)b^2\a^2\b^2\nonumber\\
&&+(25n-57)\b^4\big\}\b\theta^2
+6b^4\a^2(b^2\a^2-\b^2)\big\{(8n-37)b^2\a^2-(6n-43)\b^2\big\}\theta^is_{i0}-6b^2\a^2(b^2\a^2-\b^2)\nonumber\\
&&\cdot\big\{5b^2\a^2+(4n-5)\b^2\big\}\b\theta^is_i
+6b^4\a^2(b^2a^2-\b^2)\big\{3(n-1)b^2\a^2-2(2n-1)\b^2\big\}\b\theta^i\theta_i\nonumber\\
&&+3(7n-15)b^4(b^2\a^2-\b^2)^2\b\theta_{0|0}+6b^4\a^2(b^2\a^2-\b^2)^2\theta_{0|b}
+18b^4\a^2(b^2\a^2-\b^2)^2\b\theta^i{}_{|i}.\label{cminaing}
\end{eqnarray}
Since $\b^9$ can't be divided by $\a^2$ exactly, we know $K=0$.
\end{proof}

Lemma \ref{wingangdlnand} shows that any Einstein singular square metric must be Ricci flat. This conclusion coincides with that of regular square metrics.

\begin{lemma}
When $n\geq3$,
\begin{eqnarray}
\theta=-\f{3}{b^2}\so\label{uenanngel}
\end{eqnarray}
\end{lemma}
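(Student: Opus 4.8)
The plan is to squeeze the desired relation out of the single equation that survives once we know $K=0$. By Lemma \ref{wingangdlnand} the Ricci scalar vanishes, so the identity (\ref{cniangengak}), namely $\Irrat=\a^2\cdot\textrm{Poly}_3(y)-18(n-1)Kb^4\b^9=0$, collapses to $\a^2\cdot\textrm{Poly}_3(y)=0$, and hence $\textrm{Poly}_3(y)=0$ as a polynomial in $y$. I would then read the structure of $\textrm{Poly}_3$ off (\ref{cminaing}), now with $K=0$.

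First I would scan the terms of $\textrm{Poly}_3$ and observe that every term carries an explicit factor $b^2\a^2-\b^2$ \emph{except} the three terms quadratic in $(\so,\theta)$: the $\so^2$-term $9\{(9n-17)b^4\a^4+20b^2\a^2\b^2+(n-1)\b^4\}\b\so^2$, the $\so\theta$-term $12b^2\{(7n-15)b^4\a^4-(5n-19)b^2\a^2\b^2+3(n-1)\b^4\}\b\so\theta$, and the $\theta^2$-term $-b^4\{(17n-33)b^4\a^4-4(13n-22)b^2\a^2\b^2+(25n-57)\b^4\}\b\theta^2$. A one-line check confirms they are genuinely not divisible by $b^2\a^2-\b^2$: substituting $b^2\a^2=\b^2$ into the three braces yields $(10n+2)\b^4$, $(5n+1)\b^4$, $(-10n-2)\b^4$, none of which vanish. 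Since $\textrm{Poly}_3(y)=0$ and all the other terms already carry the factor $b^2\a^2-\b^2$, the sum of these three terms must itself be divisible by $b^2\a^2-\b^2$.

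Next I reduce that sum modulo $b^2\a^2-\b^2$, i.e. replace $b^2\a^2$ by $\b^2$ in the three braces. Using the values above and $10n+2=2(5n+1)$, the bracketed combination factors as a perfect square,
\begin{eqnarray*}
9(10n+2)\so^2+12b^2(5n+1)\so\theta+(10n+2)b^4\theta^2=2(5n+1)(3\so+b^2\theta)^2,
\end{eqnarray*}
so the sum of the three terms is congruent to $2(5n+1)\b^5(3\so+b^2\theta)^2$ modulo $b^2\a^2-\b^2$. Therefore $2(5n+1)\b^5(3\so+b^2\theta)^2$ is divisible by $b^2\a^2-\b^2$. Because $n\geq3$, the quadratic form $b^2\a^2-\b^2$ has rank $n-1\geq2$ and is irreducible over $\mathbb{R}$ (exactly as argued in Lemma \ref{adnaindnignag}), while $5n+1\neq0$ and $\b,\ 3\so+b^2\theta$ are linear forms in $y$. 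Since $\mathbb{R}[y^1,\dots,y^n]$ is a unique factorization domain, an irreducible quadratic cannot divide a nonzero product of linear forms; as $\b\neq0$ this forces $3\so+b^2\theta=0$, i.e. $\theta=-\f{3}{b^2}\so$.

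The only delicate points are the bookkeeping—checking term by term that precisely the three $(\so,\theta)$-quadratic terms escape the factor $b^2\a^2-\b^2$—and recognizing that the reduced sum is the perfect square $2(5n+1)\b^5(3\so+b^2\theta)^2$. Once that square appears, the irreducibility of $b^2\a^2-\b^2$ (which is exactly where the hypothesis $n\geq3$ enters) closes the argument at once, so I expect the main effort to lie in verifying the divisibility pattern and the three brace-coefficients rather than in any conceptual difficulty.
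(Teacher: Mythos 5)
Your proposal is correct and follows essentially the same route as the paper: the decomposition you derive by isolating the three $(\so,\theta)$-quadratic terms and reducing modulo $b^2\a^2-\b^2$ is exactly the paper's identity $\frac{\Irrat}{\a^2}=(b^2\a^2-\b^2)\cdot\textrm{Poly}_4(y)+2(5n+1)\b^5(b^2\theta+3\so)^2$, and the concluding step via irreducibility of $b^2\a^2-\b^2$ for $n\geq3$ is identical. Your brace-coefficient checks $(10n+2)$, $(5n+1)$, $-(10n+2)$ and the perfect-square factorization all agree with the paper's stated result.
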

\begin{proof}
Plugging $K=0$ into (\ref{cniangengak}) and combining with (\ref{cminaing}) yields
\begin{eqnarray}
\frac{\Irrat}{\a^2}=(b^2\a^2-\b^2)\cdot\textrm{Poly}_4(y)+2(5n+1)\b^5(b^2\theta+3\so)^2=0.\label{baidbgba}
\end{eqnarray}
where
\begin{eqnarray}
\textrm{Poly}_4(y)&=&-18b^4(b^2\a^2-\b^2)\b\Ricoo-9(n-1)(b^2\a^2-\b^2)^2\b c^2+9(n-1)b^2(b^2\a^2-\b^2)(b^2\a^2+\b^2)\b cd\nonumber\\
&&-2(n-3)b^4(b^2\a^2-\b^2)^2\b d^2-9(n-1)b^2(b^2\a^2-\b^2)(b^2\a^2+\b^2)\co-6b^4(b^2\a^2-\b^2)\nonumber\\
&&\cdot\big\{(n-1)b^2\a^2-(n-3)\b^2\big\}d_{0}+12b^4\a^2(b^2\a^2-\b^2)\b\db-18(b^2\a^2-\b^2)\big\{(2n-3)b^2\a^2\nonumber\\
&&-(n-1)\b^2\big\}c\so-6b^2(b^2\a^2-\b^2)\big\{2(n+1)b^2\a^2-3(n-1)\b^2\big\}d\so+9\b\big\{(9n-17)b^2\a^2\nonumber\\
&&+3(3n+1)\b^2\big\}\so^2+36b^4\a^2(b^2\a^2-\b^2)\sIohi-9(n-1)b^2(b^2\a^2-\b^2)\b\soho-18b^2\a^2(b^2\a^2-\b^2)\nonumber\\
&&\cdot\sohb+18b^2\a^2(b^2\a^2-\b^2)\b\sIhi+72b^4\a^2\b\too-18b^2\a^2\big\{11b^2\a^2-(2n+9)\b^2\big\}\to
+18\a^2\b\big\{(n+2)\nonumber\\
&&b^2\a^2-\b^2\big\}t+6b^2(b^2\a^2-\b^2)\big\{(3n-4)b^2\a^2+3(n-1)\b^2\big\}c\theta-4b^4(b^2\a^2-\b^2)\big\{2b^2\a^2
+(n-3)\nonumber\\
&&\cdot\b^2\big\}d\theta+12b^2\b\big\{(7n-15)b^2\a^2+2(n+2)\b^2\big\}\so\theta-b^4\big\{(17n-33)b^2\a^2
-5(7n-11)\b^2\big\}\b\theta^2\nonumber\\
&&+6b^4\a^2\big\{(8n-37)b^2\a^2-(6n-43)\b^2\big\}\theta^is_{i0}-6b^2\a^2\big\{5b^2\a^2+(4n-5)\b^2\big\}\b\theta^is_i
+6b^4\a^2\nonumber\\
&&\cdot\big\{3(n-1)b^2\a^2-2(2n-1)\b^2\big\}\b\theta^i\theta_i+3(7n-15)b^4(b^2\a^2-\b^2)\b\theta_{0|0}
+6b^4\a^2(b^2\a^2-\b^2)\theta_{0|b}\nonumber\\
&&+18b^4\a^2(b^2\a^2-\b^2)\b\theta^i{}_{|i}.\label{onaienkgnkg}
\end{eqnarray}

Due to (\ref{baidbgba}), we know that $(b^2\theta+3\so)^2$ must be divided by $b^2\a^2-\b^2$ exactly since $\b^5$ can't be. However, $(b^2\theta+3\so)^2$ is reducible but $b^2\a^2-\b^2$ is irreducible  when $n\geq3$, which have been pointed out in the proof of Lemma \ref{adnaindnignag}. Hence, such divisibility relationship doesn't exist unless $b^2\theta+3\so=0$.
\end{proof}

\begin{lemma}
When $n\geq3$, the functions $c(x)$ and $d(x)$ must satisfy condition
\begin{eqnarray}\label{pawmng}
9b^2\co+6b^4d_0-(9c^2+9b^2cd-2b^4d^2)\b+6(9c+2b^2d)\so=0,
\end{eqnarray}
\end{lemma}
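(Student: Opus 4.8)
The starting point is the polynomial identity already isolated in the previous lemma. Setting $b^2\theta+3\so=0$ in (\ref{baidbgba}) annihilates the term $2(5n+1)\b^5(b^2\theta+3\so)^2$, and since $b^2\a^2-\b^2\not\equiv0$ we are left with the identity $\textrm{Poly}_4(y)=0$, where $\textrm{Poly}_4$ is the polynomial (\ref{onaienkgnkg}). The plan is to read off (\ref{pawmng}) as the curvature-free content of this identity. First I would substitute $\theta=-\f{3}{b^2}\so$ everywhere in (\ref{onaienkgnkg}), using $b^i\theta_i=0$ together with the algebraic reductions $\theta^is_{i0}=-\f{3}{b^2}\to$, $\theta^is_i=-\f{3}{b^2}t$ and $\theta^i\theta_i=\f{9}{b^4}t$, and the covariant-derivative reductions of $\theta_{0|0}$, $\theta_{0|b}$ and $\theta^i{}_{|i}$; these last ones follow from $(b^2)_{|k}=2(\rk+\sk)$ and from the first-order data $\rk=(c+b^2d)\bk+b^2\theta_k$, $r=(c+b^2d)b^2$ read off from (\ref{eiweimgpeomm}). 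After this step $\textrm{Poly}_4$ is expressed through $\a^2,\b,\Ricoo$, the second-order quantities $\soho,\sIohi,\sohb,\sIhi$, and the algebraic quantities $\so,\too,\to,t$.

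Next I would eliminate the four second-order $\b$-derivatives. Applying the prior formulae (\ref{relation2}), (\ref{relation4}), (\ref{relation5}) and (\ref{relation6}) rewrites $\sIohi,\soho,\sohb,\sIhi$ in terms of curvature contractions of $\a$ (namely $\Rico$, $\Rbb$ and $\Ric$) plus first-order $r$-derivatives, and the latter are then replaced by the explicit list (\ref{downieng}). The identity $\textrm{Poly}_4=0$ thereby becomes a polynomial in $\a^2$ and $\b$ whose coefficients involve only the curvature of $\a$ and the structural data $c,d,\co,d_0,\so$ together with the $t$-terms. Since $b^2\a^2-\b^2$ is irreducible for $n\geq3$ — as established in the proof of Lemma \ref{adnaindnignag} — unique factorization in the polynomial ring of $y$ lets me split this identity into independent blocks: one block is absorbed into the determination of $\Ricoo$, while the curvature-free residue is the first-order relation (\ref{pawmng}).

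The crux is the elimination of curvature. A single slice of $\textrm{Poly}_4=0$ — for instance its restriction to $\b=0$ — does not close on itself, because expanding $\sIohi$ by (\ref{relation2}) injects the Ricci contraction $\Rico$ of $\a$, which no other term produced from $\Irrat$ can cancel. To reach a genuinely curvature-free relation one must play the two governing identities against each other, eliminating $\Rico$ exactly as Lemma \ref{adnaindnignag} eliminated unwanted terms through the combination $b^2\Rat-\b\Irrat$; only after $\Rico$ (and the remaining contractions $\Rbb,\Ric$) drop out does the residue collapse to (\ref{pawmng}). Apart from this conceptual point, the decisive practical difficulty is bookkeeping: the reductions of $\theta_{0|0},\theta_{0|b},\theta^i{}_{|i}$ and the whole substitution (\ref{downieng}) must be carried through without error, and in particular the derivatives of the non-constant function $b$, entering through $(b^2)_{|k}=2(\rk+\sk)$, have to be retained, since discarding them would corrupt the very coefficients $9b^2$, $6b^4$ and $6(9c+2b^2d)$ appearing in (\ref{pawmng}).
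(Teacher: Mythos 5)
Your skeleton is right in outline --- substitute $\theta=-\f{3}{b^2}\so$ into (\ref{onaienkgnkg}), then combine $\Rat$ and $\Irrat$ and extract a first-order residue by an irreducibility argument --- but two of your concrete steps would fail as stated, and the step that actually does the work is left unspecified. Converting $\soho$, $\sIohi$, $\sIhi$ into curvature contractions via (\ref{relation4}), (\ref{relation2}), (\ref{relation6}) goes the wrong way: it trades three unknown scalars for the new unknowns $\Rbb$, $\Rico$, $\Ric$, which are contractions of the still-undetermined Riemann tensor of $\a$, and there is no ``splitting into blocks'' available --- a single polynomial identity in $y$ containing $\Ricoo$ (quadratic), $\Rico$ (linear) and the scalars $\Ric,\Rbb$ cannot be separated into a curvature part and a curvature-free part by unique factorization, since these quantities are neither independent of one another nor known. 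The paper never performs this conversion; it uses only the curvature-free corollary (\ref{relation5}) to express $\sohb$, and disposes of $\Ricoo$ and $\soho$ by choosing the explicit combination $2b^2\b\Rat+(b^2\a^2+\b^2)\Irrat$, in which the coefficients of $\Ricoo$ and of $\soho$ cancel identically while $\sIohi$ and $\sIhi$ survive only multiplied by $\a^2$ (see (\ref{naengnkdd})--(\ref{ganndnga})). Identifying that combination is the substantive content of the proof; your analogy with $b^2\Rat-\b\Irrat$ gestures at the need for it but does not produce it.

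Second, you cannot yet claim the identity ``becomes a polynomial in $\a^2$ and $\b$'': the quantities $\too$ and $\to$ are a quadratic and a linear form in $y$ not expressible through $\a^2$ and $\b$. The paper first reads off from (\ref{imimangng}) that $b^4\too-2b^2\b\to+\b^2 t+b^2\so^2$ is divisible by the irreducible form $b^2\a^2-\b^2$, introduces $\sigma(x)$ via (\ref{iweigmirabdng}), and only then is the residue controllable; without that intermediate step your final residue is contaminated by $\too$ and $\to$. Two smaller points: the closing divisibility argument in (\ref{owmiengka}) is by $\a^2$ (an irreducible quadratic that cannot divide the product of $\b^4$ with a linear factor), not by $b^2\a^2-\b^2$; and your algebraic reductions carry sign errors, since $s^is_i=-t$ gives $\theta^i\si=\f{3}{b^2}t$ and $\theta^i\theta_i=-\f{9}{b^4}t$, the opposite of what you wrote.
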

\begin{proof}
By (\ref{uenanngel}) we have
\begin{eqnarray}\label{oweningieniang}
&\displaystyle\theta^is_{i0}=-\f{3}{b^2}\to,\qquad\theta^i\si=\f{3}{b^2}t,\qquad\theta^i\theta_i=-\f{9}{b^4}t,&\nonumber\\
&\displaystyle\theta_{|0}=-\f{3}{b^2}\soho+\f{6}{b^4}(\ro+\so)\so,\qquad\theta_{|b}=-\f{3}{b^2}\sohb+\f{6}{b^4}\so r,\qquad\theta^i{}_{|i}=-\f{3}{b^2}\sIhi-\f{6}{b^4}(q+t).&
\end{eqnarray}
On the other hand, by the priori formula (\ref{relation5}) and $\rohb$ in (\ref{downieng}), we have
\begin{eqnarray}\label{wminibeubgodn}
\sohb=-\f{b^4(\co+b^2d_0)-b^2\b(\cb+b^2\db)+2b^2(2c-b^2d)\so+2b^2\to+6\b t}{2b^2}.
\end{eqnarray}

Plugging (\ref{uenanngel}), (\ref{oweningieniang}) and (\ref{wminibeubgodn}) into (\ref{baidbgba}), and combining with (\ref{onaienkgnkg}), yields
\begin{eqnarray}
\frac{b^2\Irrat}{\a^2(b^2\a^2-\b^2)}
=(b^2\a^2-\b^2)\cdot\textrm{Poly}_5(y)+72\b^3(b^4\too-2b^2\b\to+\b^2t+b^2\so^2)=0,\label{imimangng}
\end{eqnarray}
where
\begin{eqnarray}
\textrm{Poly}_5(y)&=&-18b^6\b\Ricoo-9(n-1)b^2(b^2\a^2-\b^2)\b c^2+9(n-1)b^4(b^2\a^2+\b^2)\b cd-2(n-3)b^6(b^2\a^2
-\b^2)\nonumber\\
&&\cdot\b d^2-9b^4\big\{(n-3)b^2\a^2+(n-1)\b^2\big\}\co-6b^6\big\{(n-4)b^2\a^2-(n-3)\b^2\big\}d_{0}-18b^4\a^2\b\cb\nonumber\\
&&-6b^6\a^2\b\db-18b^2(5n-13)(b^2\a^2-\b^2)c\so-12b^4\big\{(n-1)b^2\a^2-(13n-27)\b^2\big\}d\so\nonumber\\
&&-72(8n-17)b^2\b\so^2+72b^4\b\too-72b^2\big\{(2n-7)b^2\a^2+2\b^2\big\}\to-72\big\{2(n-3)b^2\a^2-\b^2\big\}\b t\nonumber\\
&&+36b^6\a^2\sIohi-36b^4\a^2\b\sIhi-72(n-2)b^4\b\soho.\label{hdonangga}
\end{eqnarray}
(\ref{imimangng}) indicates that $b^4\too-2b^2\b\to+\b^2t+b^2\so^2$ can be divided by $b^2\a^2-\b^2$ exactly. Hence, there is a function $\sigma(x)$ such that
\begin{eqnarray}\label{iweigmirabdng}
\too=\sigma(x)(b^2\a^2-\b^2)+\f{2}{b^2}\b\to-\f{1}{b^4}\b^2t-\f{1}{b^2}\so^2.
\end{eqnarray}
As a result, we have
\begin{eqnarray}\label{doweminigniang}
\tIi=(n-1)\sigma b^2+\f{2}{b^2}t.
\end{eqnarray}

Plugging (\ref{iweigmirabdng}) and (\ref{doweminigniang}) into (\ref{imimangng}), and combining with (\ref{hdonangga}), $\Irrat$ reads \begin{eqnarray}
\Irrat&=&\a^2(b^2\a^2-\b^2)^2\Big\{-18b^4\b\Ricoo+72b^4\a^2\b\sigma-9(n-1)(b^2\a^2-\b^2)\b c^2
+9(n-1)b^2(b^2\a^2+\b^2)\b cd\nonumber\\
&&-2(n-3)b^4(b^2\a^2-\b^2)\b d^2-9b^2[(n-3)b^2\a^2+(n-1)\b^2]\co-6b^4[(n-4)b^2\a^2-(n-3)\b^2]d_{0}\nonumber\\
&&-6b^2\a^2\b(3\cb+b^2\db)-18(5n-13)(b^2\a^2-\b^2)c\so-12b^2[(n-1)b^2\a^2-(13n-27)\b^2]d\so\nonumber\\
&&-576(n-2)\b\so^2-72(2n-7)b^2\a^2\to-144(n-3)\a^2\b t+36b^4\a^2\sIohi-36b^2\a^2\b\sIhi\nonumber\\
&&-72(n-2)b^2\b\soho\Big\}.\label{naengnkdd}
\end{eqnarray}
Meanwhile, plugging all the related formulae into (\ref{RatBerwald}), $\Rat$ reads
\begin{eqnarray}
\Rat&=&\f{1}{b^2}\a^2(b^2\a^2-\b^2)^2\Big\{9b^4(b^2\a^2+\b^2)\Ricoo-36b^4\a^2(nb^2\a^2-3\b^2)\sigma
+9(n-1)b^2\a^2(b^2\a^2-\b^2)c^2\nonumber\\
&&-18(n-1)b^4\a^2\b^2cd-2b^4(b^2\a^2-\b^2)[b^2\a^2-(n-2)\b^2]d^2+18(n-2)b^4\a^2\b\co+6b^4[(n-3)b^2\a^2\nonumber\\
&&-(n-2)\b^2]\b d_{0}+18b^2\a^2\b^2\cb-6b^4\a^2(b^2\a^2-2\b^2)\db+72(n-2)(b^2\a^2-\b^2)\b c\so\nonumber\\
&&-12b^2[(5n-12)b^2\a^2+7(n-2)\b^2]\b d\so+288(n-2)(b^2\a^2+\b^2)\so^2+72(2n-7)b^2\a^2\b\to\nonumber\\
&&+144(n-3)b^2\a^4t-36b^4\a^2\b\sIohi+36b^4\a^4\sIhi+36(n-2)b^2(b^2\a^2+\b^2)\soho\Big\}.\label{naengnkd}
\end{eqnarray}

By (\ref{naengnkdd}) and (\ref{naengnkd}) we have
\begin{eqnarray}
\frac{2b^2\b\Rat+(b^2\a^2+\b^2)\Irrat}{\a^2(b^2\a^2-\b^2)^2}
&=&\a^2\cdot\textrm{Poly}_6(y)-(n-1)\b^4\big\{9b^2\co+6b^4d_0\nonumber\\
&&-(9c^2+9b^2cd-2b^4d^2)\b+6(9c+2b^2d)\so\big\}\nonumber\\
&=&0,\label{owmiengka}
\end{eqnarray}
where
\begin{eqnarray}
\textrm{Poly}_6(y)&=&-72b^4\b\big\{(n-1)b^2\a^2-4\b^2\big\}\sigma+(n-1)b^2(b^2\a^2-2\b^2)\b(9c^2+9b^2cd-2b^4d^2)
-9b^4\big\{(n-3)b^2\a^2\nonumber\\
&&-2(n-2)\b^2\big\}\co-6b^6\big\{(n-4)b^2\a^2-(2n-5)\b^2\big\}d_{0}-18b^2\b(b^2\a^2-\b^2)(\cb+b^2\db)\nonumber\\
&&-18b^2\big\{(5n-13)b^2\a^2-8(n-2)\b^2\big\}c\so-12(n-1)b^4(b^2\a^2-2\b^2)d\so-72(2n-7)b^2\nonumber\\
&&\cdot(b^2\a^2-\b^2)\to+144(n-3)(b^2\a^2-\b^2)\b t+36b^4(b^2\a^2-\b^2)\sIohi+36b^2(b^2\a^2-\b^2)\b\sIhi.\label{ganndnga}
\end{eqnarray}
(\ref{owmiengka}) indicates that $\b^4\big\{9b^2\co+6b^4d_0-(9c^2+9b^2cd-2b^4d^2)\b+6(9c+2b^2d)\so\big\}$ must be divided by $\a^2$ exactly. Notice that both $\b$ and $9b^2\co+6b^4d_0-(9c^2+9b^2cd-2b^4d^2)\b+6(9c+2b^2d)\so$ are linear functions of $y$. Hence such divisibility relationship doesn't exist unless (\ref{pawmng}) holds.
\end{proof}

\begin{lemma}
When $n\geq3$,
\begin{eqnarray}\label{omdingadnnd}
b^2s_{i0}+\b\si-\so\bi=0.
\end{eqnarray}
In particular,
\begin{eqnarray}\label{fwoeningaidng}
\too=\f{1}{b^4}\b^2t-\f{1}{b^2}\so^2,\qquad\to=\f{1}{b^2}\b t,\qquad\tIi=\f{2}{b^2}t.
\end{eqnarray}
\end{lemma}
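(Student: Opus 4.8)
The plan is to reduce the whole statement to the single identity (\ref{omdingadnnd}). Once the $1$-form $v_i:=b^2 s_{i0}+\b\si-\so\bi$ is shown to vanish, its tensorial form $\sij=\frac{1}{b^2}(\bi\sj-\bj\si)$ holds, and the three formulae (\ref{fwoeningaidng}) drop out by substituting this into $\too=-s^m{}_0 s_{m0}$, $\to=\si\,s^i{}_0$ and $\tIi=a^{ij}s_{im}s^m{}_j$, using $\si\bI=0$ together with the general identity $t=-\si\sI$. So everything reduces to proving $v\equiv0$.

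First I would use that $t_{ij}=s_{im}s^m{}_j$ is negative semi-definite: for every $\xi$, $t_{ij}\xi^i\xi^j=-a^{mk}(s_{mi}\xi^i)(s_{kj}\xi^j)\le0$. Then I would compute the squared $\a$-norm of $v$; organizing the nine resulting terms via the contraction rules (and $\si\bI=0$) I expect the clean identity
\[
a^{ij}v_iv_j=-\big(b^4\too-2b^2\b\to+\b^2 t+b^2\so^2\big).
\]
The bracketed combination is precisely the one already isolated in (\ref{imimangng}); by (\ref{iweigmirabdng}) it equals $\sigma b^4(b^2\a^2-\b^2)$, whence
\[
a^{ij}v_iv_j=-\sigma b^4(b^2\a^2-\b^2).
\]
As the left side is non-negative and $b^2\a^2-\b^2$ is positive semi-definite of rank $n-1$ (so positive for generic $y$), this forces $\sigma\le0$ and shows $v\equiv0\Longleftrightarrow\sigma=0$.

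It remains to upgrade $\sigma\le0$ to $\sigma=0$. Positivity alone cannot do this: in tensor form the above reads $a^{il}S_{ij}S_{lk}=-b^4\sigma(b^2\aij-\bi\bj)$ with $S_{ij}:=b^2\sij-(\bi\sj-\bj\si)$ antisymmetric, and since the left side has even rank, $\sigma\neq0$ would force $n-1$ to be even — settling even $n$ but not odd $n$. The extra input must come from the unused equation $\Rat=0$. Concretely I would substitute (\ref{iweigmirabdng}) and (\ref{doweminigniang}) into (\ref{naengnkd}) and eliminate $\Ricoo$ against (\ref{naengnkdd}), just as in the passage yielding (\ref{owmiengka}); since (\ref{pawmng}) is already in force, the $\Ricoo$-free identity that results is $\textrm{Poly}_6=0$ with $\textrm{Poly}_6$ as in (\ref{ganndnga}). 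Reducing this cubic modulo the irreducible quadratic $\a^2$ splits it into its $\a^2$-free part (carrying a factor $\b^2$) and the coefficient of $\a^2$; feeding in (\ref{pawmng}) to clear the $\co,d_0$ terms together with the expressions for $\to,\tIi,\sIohi,\sIhi$ in terms of $t$ and $\so$ should isolate $\sigma$ and give $\sigma=0$.

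The main obstacle is exactly this final extraction. The norm computation is elementary but one-sided, yielding only $\sigma\le0$, so the reverse inequality has to be closed off by the second Einstein equation, and disentangling $\sigma$ from the various covariant-derivative terms surviving in $\textrm{Poly}_6$ (after all previously proved relations are substituted) is the delicate point. Once $\sigma=0$ is secured, the identity $a^{ij}v_iv_j=-\sigma b^4(b^2\a^2-\b^2)=0$ gives $v\equiv0$, i.e. (\ref{omdingadnnd}), and (\ref{fwoeningaidng}) follows by the substitution of the first paragraph.
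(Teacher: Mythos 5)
Your reduction of the lemma to $\sigma=0$ is sound, and your final step is exactly the paper's: the identity $a^{ij}v_iv_j=-\bigl(b^4\too-2b^2\b\to+\b^2t+b^2\so^2\bigr)$ for $v_i=b^2s_{i0}+\b\si-\so\bi$ is correct, so once $\sigma=0$ is known, (\ref{iweigmirabdng}) gives $a^{ij}v_iv_j=0$, hence $v\equiv0$ and the three formulae in (\ref{fwoeningaidng}) follow by substitution. Your two additional observations --- that positivity of the norm forces $\sigma\le0$, and that the antisymmetry of $S_{ij}:=b^2\sij-(\bi\sj-\sI\hspace{-2pt}\cdot\hspace{-2pt}) $-type tensor forces $n-1$ to be even whenever $\sigma\neq0$, which settles all even $n$ --- are correct and are genuinely extra content not present in the paper.

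However, for odd $n\ge3$ the heart of the lemma, namely $\sigma=0$, is left as a plan rather than a proof, and you flag this yourself. The missing mechanism is concrete. After substituting (\ref{cosidndaignd})--(\ref{cbsidndaignd}) into (\ref{owmiengka}) one arrives at (\ref{dgnandnga}), which has the form $b^2\a^2\cdot\textrm{Poly}_7(y)=\b^2\cdot\{\cdots\}$ where both $\textrm{Poly}_7$ and the bracket are polynomials of degree \emph{one} in $y$; since $\a^2$ is irreducible of degree two it cannot divide a nonzero linear polynomial, so the bracket must vanish identically, which is (\ref{ieimiinwineing}). The step you do not supply is how to disentangle $\sigma$ from the surviving covariant-derivative terms in that identity: the paper differentiates (\ref{ieimiinwineing}) with respect to $y^k$ and contracts with $b^k$, using $\sIkhi b^k=-\sIhi-\qIi-\tIi$ together with $\qIi=0$ and $\tIi=(n-1)\sigma b^2+\f{2}{b^2}t$ from (\ref{doweminigniang}); under this contraction the $\sIhi$, $\db$ and $t$ terms cancel and one is left with $-6(n+7)\sigma b^6=0$, whence $\sigma=0$ for all $n$. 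Without this (or an equivalent) extraction your argument establishes the lemma only for even $n$, so as written there is a genuine gap.
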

\begin{proof}
By (\ref{pawmng}) we have
\begin{eqnarray}
\co=-\f{6b^4d_0-(9c^2+9b^2cd-2b^4d^2)\b+6(9c+2b^2d)\so}{9b^2},\label{cosidndaignd}
\end{eqnarray}
hence,
\begin{eqnarray}
\cb=-\f{6b^2\db-(9c^2+9b^2cd-2b^4d^2)}{9}.\label{cbsidndaignd}
\end{eqnarray}
Plugging (\ref{cosidndaignd}) and (\ref{cbsidndaignd}) into (\ref{owmiengka}), and combining with (\ref{ganndnga}), we have
\begin{eqnarray}
\frac{2b^2\b\Rat+(b^2\a^2+\b^2)\Irrat}{6\a^4(b^2\a^2-\b^2)^2}
&=&b^2\a^2\cdot\textrm{Poly}_7(y)-\b^2\big\{6b^4\sIohi+6b^2\b\sIhi+b^6d_0-b^4\b\db\nonumber\\
&&-6(n-2)b^2c\so-4b^4d\so-12(2n-7)b^2\to+24(n-3)\b t-48\sigma b^4\b\big\}\nonumber\\
&=&0,\label{dgnandnga}
\end{eqnarray}
where
\begin{eqnarray}
\textrm{Poly}_7(y)&=&-12(n-1)b^4\b\sigma+b^6d_{0}-b^4\b\db-6(n-2)b^2c\so-4b^4d\so-12(2n-7)b^2\to+24(n-3)\b t\nonumber\\
&&+6b^4\sIohi+6b^2\b\sIhi.\label{dgnandnga2}
\end{eqnarray}
So we have
\begin{eqnarray}
&&6b^4\sIohi+6b^2\b\sIhi+b^6d_0-b^4\b\db-6(n-2)b^2c\so\nonumber\\
&&-4b^4d\so-12(2n-7)b^2\to+24(n-3)\b t-48\sigma b^4\b=0.\label{ieimiinwineing}
\end{eqnarray}
Differentiating the above equality with respect to $y^k$ and then contracting with $b^k$ yields
\begin{eqnarray*}
-6(n+7)\sigma b^6=0,
\end{eqnarray*}
Notice that we use the facts $\sIkhi b^k=-\sIhi-\qIi-\tIi$ and $\qIi=0$ here. As a result, we conclude that
\begin{eqnarray}\label{sigmazero}
\sigma=0.
\end{eqnarray}
In this case, (\ref{iweigmirabdng}) reads
\begin{eqnarray*}
b^4\too-2b^2\b\to+\b^2t+b^2\so^2=0.
\end{eqnarray*}
which is equivalent to
\begin{eqnarray*}
(b^2s_{i0}+\b\si-\so\bi)a^{ij}(b^2s_{j0}+\b\sj-\so\bj)=0.
\end{eqnarray*}
Regard $b^2s_{i0}+\b\si-\so\bi$ as a vector, then the above equality means that the norm of such vector is zero. Hence, (\ref{omdingadnnd}) holds.
\end{proof}

As a summary, we have the following conclusion.
\begin{proposition}
When $n\geq3$, the $1$-form of an Einstein singular square metric (\ref{singularBerwald}) must satisfy
\begin{eqnarray*}
\rij=c(x)\aij+d(x)\bi\bj-\f{3}{b^2}(\bi\sj+\bj\si),\qquad\sij=\f{1}{b^2}(\bi\sj-\bj\si),
\end{eqnarray*}
where $c(x)$ and $d(x)$ must satisfy three additional conditions
\begin{eqnarray}
&9b^2\co+6b^4d_0-(9c^2+9b^2cd-2b^4d^2)\b+6(9c+2b^2d)\so=0,&\label{woeminaddd'amd}\\
&b^4(\co+b^2d_0)-b^2\b(\cb+b^2\db)+2b^2(2c-b^2d)\so+2b^2\to+6\b t+2b^2\sohb=0,&\label{wminibeubgodn2}\\
&b^6d_0-b^4\b\db-6(n-2)b^2c\so-4b^4d\so+12\b t+6b^2\b\sIhi+6b^4\sIohi=0.&\label{siohiBerwald}
\end{eqnarray}
\end{proposition}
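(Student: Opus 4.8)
The statement is a synthesis of the preceding lemmas, so the plan is to assemble their conclusions and translate the resulting homogeneous-polynomial identities in $y$ into the claimed tensorial ones. The genuine analytic work—irreducibility of $b^2\a^2-\b^2$, the vanishing $K=0$, the vanishing $\sigma=0$, and the various divisibility arguments—has already been done, so this proposition is essentially a bookkeeping step.

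First I would establish the formula for $\rij$. By Lemma~\ref{adnaindnignag} we have $\roo=c\a^2+d\b^2+2\b\theta$ with $b^i\theta_i=0$, and (\ref{uenanngel}) gives $\theta=-\f{3}{b^2}\so$, i.e. $\theta_i=-\f{3}{b^2}\si$. Substituting and polarizing the quadratic form $\roo=\rij y^iy^j$—noting that $2\b\theta$ contributes the symmetric tensor $\bi\theta_j+\bj\theta_i$, and that $\rij$ is symmetric so it is recovered exactly as the symmetric coefficient tensor—yields
\begin{eqnarray*}
\rij=c\aij+d\bi\bj-\f{3}{b^2}(\bi\sj+\bj\si).
\end{eqnarray*}
Next, for $\sij$ I would use (\ref{omdingadnnd}), namely $b^2s_{i0}+\b\si-\so\bi=0$. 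Writing $s_{i0}=\sij y^j$, $\b=\bj y^j$ and $\so=\sj y^j$, this reads $(b^2\sij+\bj\si-\bi\sj)y^j=0$ for all $y$; reading off the coefficient of each $y^j$ gives $\sij=\f{1}{b^2}(\bi\sj-\bj\si)$. I would then record the consistency checks that this expression is antisymmetric in $i,j$ and that contracting it with $b^i$ returns $\sj$, using $\si b^i=0$ from the antisymmetry of $s_{ij}$.

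It remains to collect the three scalar constraints on $c$ and $d$. The first, (\ref{woeminaddd'amd}), is exactly (\ref{pawmng}) and needs no further work. The second, (\ref{wminibeubgodn2}), is obtained simply by clearing the denominator in the expression (\ref{wminibeubgodn}) for $\sohb$, which itself issued from the priori formula (\ref{relation5}). For the third, (\ref{siohiBerwald}), I would start from (\ref{ieimiinwineing}), substitute $\sigma=0$ (established in (\ref{sigmazero})), and replace $\to$ using (\ref{fwoeningaidng}), i.e. $\to=\f{1}{b^2}\b t$; the term $-12(2n-7)b^2\to$ then becomes $-12(2n-7)\b t$, and the two $\b t$ contributions combine as $[-12(2n-7)+24(n-3)]\b t=12\b t$, producing precisely (\ref{siohiBerwald}).

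Since each of these steps is a direct restatement or a one-line algebraic rearrangement of material already proved, there is no substantial obstacle. The only point demanding care is the passage from polynomial identities in $y$ to genuine tensor identities: for $\roo$ one recovers only the symmetric part of the coefficient tensor, which is harmless because $\rij$ is symmetric, while for $\sij$ one must verify that the coefficient tensor extracted from the linear form is consistent with the antisymmetry of $s_{ij}$. Assembling these pieces completes the proof.
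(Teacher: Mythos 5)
Your proposal is correct and follows essentially the same route as the paper: the tensor identities for $\rij$ and $\sij$ are read off from (\ref{eiweimgpeomm}), (\ref{uenanngel}) and (\ref{omdingadnnd}), the first two scalar conditions are restatements of (\ref{pawmng}) and (\ref{wminibeubgodn}), and the third follows from (\ref{ieimiinwineing}) after setting $\sigma=0$ and substituting $\to=\f{1}{b^2}\b t$ (which the paper obtains by contracting (\ref{omdingadnnd}) with $s^i$, exactly the content of (\ref{fwoeningaidng})). Your arithmetic $-12(2n-7)+24(n-3)=12$ matches the coefficient of $\b t$ in (\ref{siohiBerwald}), so nothing is missing.
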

\begin{proof}
Contracting (\ref{omdingadnnd}) with $s^i$ yields
\begin{eqnarray}
\to=\f{1}{b^2}\b t.\label{tdnguangknd}
\end{eqnarray}
Plugging (\ref{sigmazero}) and (\ref{tdnguangknd}) into (\ref{ieimiinwineing}), we can obtain (\ref{siohiBerwald}).
\end{proof}

Now we can proof the main result of this paper.
\begin{proof}[Proof of Theorem \ref{iwneingadad}]
Necessity:~Combining (\ref{dgnandnga}), (\ref{dgnandnga2}) and (\ref{siohiBerwald}) one can see that $\Rat$ is a scaling of $\Irrat$. hence, we just need to consider $\Irrat$ from now on.

Plugging (\ref{cosidndaignd}), (\ref{cbsidndaignd}), (\ref{sigmazero}), (\ref{tdnguangknd}) and (\ref{siohiBerwald}) into (\ref{naengnkdd}), $\Irrat$ reads
\begin{eqnarray*}
\Irrat&=&-2\a^2(b^2\a^2-\b^2)^2\b\big\{9b^4\Ricoo+[9(n-1)b^2c^2-2b^6d^2-6b^4\db+144(n-3)t+36 b^2\sIhi]\a^2\\
&&-2(n-2)b^4d^2\b^2-6(n-2)b^4\b d_0-72(n-2)c\b\so-84(n-2)b^2d\b\so+288(n-2)\so^2\\
&&+36(n-2)b^2\soho\big\}\\
&=&0.
\end{eqnarray*}
Hence, $\Ricoo$ can be solved as below,
\begin{eqnarray*}
\Ricoo&=&-\f{1}{9b^4}\big\{[9(n-1)b^2c^2-2b^6d^2-6b^4\db+144(n-3)t+36 b^2\sIhi]\a^2-2(n-2)b^4d^2\b^2\\
&&-6(n-2)b^4\b d_0-72(n-2)c\b\so-84(n-2)b^2d\b\so+288(n-2)\so^2+36(n-2)b^2\soho\big\},
\end{eqnarray*}
then we have
\begin{eqnarray*}
\Rico&=&-\f{1}{9b^4}\big\{[9(n-1)b^2c^2-2(n-1)b^6d^2-3nb^4\db+18(5n-18)t+36b^2\sIhi]\b\nonumber\\
&&-3(n-2)\left[2b^6d_0-b^4\b\db+4b^2(3c+b^2d)\so+30\b t\right]\big\},
\end{eqnarray*}
and
\begin{eqnarray*}
\Ric&=&-\f{1}{9b^2}\big\{9(n-1)b^2c^2-2(n-1)b^6d^2-6(n-1)b^4\db-144t+36b^2\sIhi\big\}.
\end{eqnarray*}
One can verify that the priori formulae (\ref{relation2}) and (\ref{relation6}) holds automatically.

Sufficiency:~According to the whole discussions before, the conditions (\ref{bewaldcondiononalpha})--(\ref{bewaldcondiononbetas}) and (\ref{woeminaddd'amd})--(\ref{siohiBerwald}) are sufficient to make a singular square metric (\ref{singularBerwald}) to be a Ricci-flat Einstein metric. However, (\ref{woeminaddd'amd})--(\ref{siohiBerwald}) can be rebuilt by (\ref{bewaldcondiononalpha})--(\ref{bewaldcondiononbetas}) combining with the priori formulae.

First, by (\ref{bewaldcondiononalpha}) we have
\begin{eqnarray*}
\Rico&=&-\f{1}{9b^4}\big\{[9(n-1)b^2c^2-2(n-1)b^6d^2-3nb^4\db+18(5n-18)t+36b^2\sIhi]\b-3(n-2)\big[2b^6d_0-b^4\b\db\nonumber\\
&&+4b^2(3c+b^2d)\so+30\b t\big]\big\}+\f{n-2}{9b^4}\left\{9X-b^2\b Y+b^2Z\right\},\\
\Ric&=&-\f{1}{9b^2}\big\{9(n-1)b^2c^2-2(n-1)b^6d^2-6(n-1)b^4\db-144t+36b^2\sIhi\big\},
\end{eqnarray*}
and by (\ref{bewaldcondiononbeta})--(\ref{bewaldcondiononbetas}) we have
\begin{eqnarray}\label{dinagninaaduaadgg}
\sIohi=-\f{1}{6b^4}\big\{b^6d_0-b^4\b\db-6(n-2)b^2c\so-4b^4d\so+12\b t+6b^2\b\sIhi\big\}-\f{1}{18b^4}\left\{9X-b^2\b Y+b^2Z\right\},~~~~
\end{eqnarray}
where
\begin{eqnarray*}
X&=&-2b^2\sohb-b^4(\co+b^2d_0)+b^2\b(\cb+b^2\db)-2b^2(2c-b^2d)\so-2b^2\to-6\b t,\\
Y&=&9\cb+6b^2\db-(9c^2+9b^2cd-2b^4d^2),\\
Z&=&9b^2\co+6b^4d_0-(9c^2+9b^2cd-2b^4d^2)\b+6(9c+2b^2d)\so.
\end{eqnarray*}

Now, it is easy to verify that the priori formulae (\ref{relation5}), (\ref{relation2}) and (\ref{relation6}) read
\begin{eqnarray*}
\sohb+p_{0}+q_{0}-q^{*}_{0}+t_{0}+\rohb-r_{|0}&=&\frac{X}{b^2},\\
\Ric+\pIi+\tIi+\rIihb-\rIhi+\sIhi&=&\f{(n-1)Y}{9},\\
\Rico+\rIiho-\rIohi-\sIohi&=&\f{9(n-3)X-(n-3)b^2\b Y+2(n-2)b^2Z}{9b^4}.
\end{eqnarray*}
So, when $n\geq3$, the priori formulae ask $X=Y=Z=0$. Obviously, $X=0$ and $Z=0$ are equivalent to (\ref{wminibeubgodn2}) and (\ref{woeminaddd'amd}) respectively, and $Y=0$ can be obtained by $Z=0$. Finally, (\ref{siohiBerwald}) holds according to (\ref{dinagninaaduaadgg}). Hence, (\ref{bewaldcondiononalpha})--(\ref{bewaldcondiononbetas}) are sufficient to make a singular square metric (\ref{singularBerwald}) be a Ricci-flat Einstein metric.
\end{proof}

\section{Constant flag curvature:~necessary and sufficient conditions}
\begin{proof}[Proof of Theorem \ref{odnanganndnand}]
Necessity:~Since $F$ is an Einstein metric with Ricci constant $K$, according to Theorem \ref{iwneingadad}, $K=0$, (\ref{bewaldcondiononbeta})--(\ref{bewaldcondiononbetas}) hold naturally.

(\ref{bewaldcondiononbeta}) is equivalent to (\ref{bewaldcondiononbetarijflag}), by which we can obtain some basic facts:
\begin{eqnarray*}
\rIk&=&c\dIk+d\bI\bk+(\bI\theta_k+\theta^i\bk),\nonumber\\
\rIkho&=&\co\dIk+d_0\bI\bk+d\bI(\rko+\sko)+d(\rIo+\sIo)\bk+\theta^i(\rko+\sko)+(\rIo+\sIo)\theta_k+\bI\theta_{k|0}+\theta^i{}_{|0}\bk,\nonumber\\
\rIohk&=&\yI\ck+\bI\{\b\dk+d(\rko-\sko)+\theta_{0|k}\}+(d\b+\theta)(\rIk+\sIk)+\theta^i(\rko-\sko)+\b\theta^i{}_{|k},\nonumber\\
\rIhk&=&\bI\{\ck+b^2\dk+2d(\rk+\sk)\}+(c+b^2d)(\rIk+\sIk)+2\theta^i(\rk+\sk)+b^2\theta^i{}_{|k},\nonumber\\
\rhk&=&b^2\{\ck+b^2\dk+2d(\rk+\sk)\}+2(c+b^2d)(\rk+\sk),\nonumber\\
\pIk&=&c^2\dIk+(2cd+b^2d^2+\theta^l\theta_l)\bI\bk+(2c+b^2d)(\bI\theta_k+\theta^i\bk)+b^2\theta^i\theta_k,\nonumber\\
\pk&=&(c+b^2d)^2\bk+(2c+b^2d)b^2\theta_k+b^2\theta^l\theta_l\bk,\nonumber\\
\qko&=&c\sko+(d\so+\theta^ls_{l0})\bk+\so\theta_k,\nonumber\\
\qok&=&-c\sko+(d\b+\theta)\sk+\b\theta^ls_{lk},\nonumber\\
\qk&=&(c+b^2d)\sk+b^2\theta^ls_{lk},\nonumber\\
\qqk&=&-c\sk-\theta^ls_l\bk,
\end{eqnarray*}
and the other related quantities $\rIo$, $\rko$, $\rIkhb$, $\rIoho$, $\rkoho$, $\rIohb$, $\rkohb$, $\rkohI$, $\roohI$, $\roohk$, $\rIho$, $\rkho$, $\rohk$, $\rIhb$, $\rkhb$, $\rkhI$, $\rohI$, $\rhI$, $\pIo$, $\pko$, $\pI$, $\qIo$, $\qoI$, $\qI$ and $\qqI$.

(\ref{woeminaddd'amd}) yields
\begin{eqnarray*}
\co=-\f{1}{9b^2}\left\{6b^4d_0-(9c^2+9b^2 cd-2b^4 d^2)\b+6(9c+2b^2d)\so\right\}.
\end{eqnarray*}
As a result, we can obtain $\cI$, $\ck$, and
\begin{eqnarray*}
\cb=-\f{1}{9}\left\{6b^2\db-(9c^2+9b^2 cd-2b^4 d^2)\right\}.
\end{eqnarray*}

(\ref{bewaldcondiononbetas}) is equivalent to (\ref{bewaldcondiononbetasijflag}), by which we can solve $\sIk$~ºÍ~$\sko$. Meanwhile, by (\ref{fwoeningaidng}) we obtain
\begin{eqnarray*}
\tIk=\f{t\bI\bk-b^2\sI\sk}{b^4},\qquad\tk=\f{t\bk}{b^2},
\end{eqnarray*}
and the other related quantities $\tIo$, $\tko$ and $\tI$.

Differentiating $b^2\sIk-\bI\sk+\sI\bk=0$ yields
\begin{eqnarray*}
0&=&2(\ro+\so)\sIk+b^2\sIkho-(\rIo+\sIo)\sk-\bI\skho+\bk\sIho+\sI(\rko+\sko)\\
&=&-\f{1}{b^2}\left\{b^2c(\yI\sk-\yk\sI)-[(2c+b^2d)\b-2\so](\bI\sk-\bk\sI)-b^4\sIkho+b^2(\bI\skho-\bk\sIho)\right\},
\end{eqnarray*}
by which we can obtain
\begin{eqnarray}
\sIkho&=&\f{1}{b^4}\left\{b^2c(\yI\sk-\yk\sI)-[(2c+b^2d)\b-2\so](\bI\sk-\bk\sI)+b^2(\bI\skho-\bk\sIho)\right\},\nonumber\\
\sIohk&=&\f{1}{b^4}\left\{b^2c(\dIk\so-\yk\sI)-[(2c+b^2d)\bk-2\sk](\bI\so-\b\sI)+b^2(\bI\sohk-\b\sIhk)\right\},
\label{singularBerwaldsiohk}\\
\sIoho&=&\f{1}{b^4}\left\{b^2c(\yI\so-\a^2\sI)-[(2c+b^2d)\b-2\so](\bI\so-\b\sI)+b^2(\bI\soho-\b\sIho)\right\}.\nonumber
\end{eqnarray}
Moreover, (\ref{3ingaindngiad}) is equivalent to
\begin{eqnarray}\label{yangenga}
b^4(\bI\dk-\bk\dI)-2(6c+5b^2d)(\bI\sk-\bk\sI)-6b^2(\sIhk-\skhI)=0,
\end{eqnarray}
by which we can obtain
\begin{eqnarray}\label{singulareBerwaldsihk}
\sohk=-\f{1}{6b^2}\left\{[b^4d_0-2(6c+5b^2d)\so]\bk-b^4\b\dk+2(6c+5b^2d)\b\sk-6b^2\skho\right\}
\end{eqnarray}
and $\sohI$.

Plugging all the relationship above into (\ref{isnnanudngad}) yields
\begin{eqnarray*}
\RIkF&=&\RIk+\f{1}{9b^4}\Big\{[(9b^2c^2+144t)\a^2-2b^4d^2\b^2-6(b^4d_0+12c\so+14b^2d\so)\b+288\so^2\nonumber\\
&&+36b^2\soho]\dIk-(9b^2c^2+144t)\yI\yk+2b^2(b^2d^2\b+3b^2d_0+6d\so)(\yI\bk+\yk\bI)\nonumber\\
&&-2b^4d^2\a^2\bI\bk-6b^4\a^2\bI\dk+72[(c+b^2d)\b-4\so](\yI\sk+\yk\sI)-12b^2d\a^2\bI\sk\nonumber\\
&&-72(c+b^2d)\a^2\bk\sI+288\a^2\sI\sk-36b^2(\yI\skho+\yk\sIho)+36b^2\a^2\sIhk\Big\}.
\end{eqnarray*}
So by the equation $\RIkF=0$ we can solve
\begin{eqnarray*}
\RIk&=&-\f{1}{9b^4}\Big\{[(9b^2c^2+144t)\a^2-2b^4d^2\b^2-6(b^4d_0+12c\so+14b^2d\so)\b+288\so^2\nonumber\\
&&+36b^2\soho]\dIk-(9b^2c^2+144t)\yI\yk+2b^2(b^2d^2\b+3b^2d_0+6d\so)(\yI\bk+\yk\bI)\nonumber\\
&&-2b^4d^2\a^2\bI\bk-6b^4\a^2\bI\dk+72[(c+b^2d)\b-4\so](\yI\sk+\yk\sI)-12b^2d\a^2\bI\sk\nonumber\\
&&-72(c+b^2d)\a^2\bk\sI+288\a^2\sI\sk-36b^2(\yI\skho+\yk\sIho)+36b^2\a^2\sIhk\Big\}.
\end{eqnarray*}
Notice that the Riemann tensor $R_{ik}$ is symmetric about two lower index, so by the above equality combining with (\ref{yangenga}) leads to (\ref{bewaldcondiononalphaflag}). Now, it can be verified that the priori formulae (\ref{relation7})--(\ref{relation4}) hold automatically.

Sufficiency:~According to the whole discussions before, the conditions (\ref{bewaldcondiononalphaflag})--(\ref{bewaldcondiononbetasijflag}), (\ref{woeminaddd'amd}), (\ref{singularBerwaldsiohk}) and (\ref{singulareBerwaldsihk}) are sufficient to make a singular square metric (\ref{singularBerwald}) to be a Finsler metric with vanishing flag curvature. However, (\ref{bewaldcondiononalphaflag}) yields (\ref{bewaldcondiononalpha}), so (\ref{woeminaddd'amd}) holds if (\ref{bewaldcondiononalphaflag})--(\ref{bewaldcondiononbetasijflag}) hold according the proof of Theorem \ref{iwneingadad}. On the other hands, (\ref{singularBerwaldsiohk}) and (\ref{singulareBerwaldsihk}) can be rebuilt by (\ref{bewaldcondiononalphaflag})--(\ref{bewaldcondiononbetasijflag}) combining with the priori formulae (\ref{relation1}) and (\ref{3ingaindngiad}) respectively.
\end{proof}

\noindent Changtao Yu\\
School of Mathematical Sciences, South China Normal
University, Guangzhou, 510631, P.R. China\\
aizhenli@gmail.com

\clearpage
\setcounter{section}{5}
\setcounter{equation}{0}
\section*{Appendix:~Riemann curvature of singular square metrics}
\begin{proposition}
The Riemann tensor of singular square metrics (\ref{singularBerwald}) are given by
\begin{eqnarray}\label{isnnanudngad}
\RIkF&=&\RIk+\big\{c_{101}\roo^2+c_{102}\roo\ro+c_{103}\roo\so+c_{104}\roo r+c_{105}\rooho+c_{106}\ro^2+c_{107}\ro\so+c_{108}\so^2+c_{109}\qoo\nonumber\\
&&+c_{110}\roho+c_{111}\soho+c_{112}\ro r+c_{113}\so r+c_{114}(\po-\qqo)+c_{115}\qo+c_{116}\to+c_{117}r_{|0}+c_{118}r^2+c_{119}p\nonumber\\
&&+c_{120}q+c_{121}t\big\}\delta^i{}_k+\big\{c_{201}\roo^2+c_{202}\roo\ro+c_{203}\roo\so+c_{204}\roo r+c_{205}\rooho+c_{206}\ro^2+c_{207}\ro\so\nonumber\\
&&+c_{208}\so^2+c_{209}\qoo+c_{210}\roho+c_{211}\soho+c_{212}\ro r+c_{213}\so r+c_{214}(\po-\qqo)+c_{215}\qo+c_{216}\to\nonumber\\
&&+c_{217}r_{|0}+c_{218}r^2+c_{219}p+c_{220}q+c_{221}t\big\}\yI\yk+\big\{c_{301}\roo^2+c_{302}\roo\ro+c_{303}\roo\so+c_{304}\roo r\nonumber\\
&&+c_{305}\rooho+c_{306}\ro^2+c_{307}\ro\so+c_{308}\so^2+c_{309}\qoo+c_{310}\roho+c_{311}\soho+c_{312}\ro r+c_{313}\so +c_{314}(\po r\nonumber\\
&&-\qqo)+c_{315}\qo+c_{316}\to+c_{317}r_{|0}+c_{318}r^2+c_{319}p+c_{320}q+c_{321}t\big\}\yI\bk+c_4\yI(\roohk-\rkoho)\nonumber\\
&&+(c_{501}\roo+c_{502}\ro+c_{503}\so+c_{504}r)\yI\rko+(c_{601}\roo+c_{602}\ro+c_{603}\so+c_{604}r)\yI\sko+c_7\yI(\qok\nonumber\\
&&-2\qko)+c_{8}\yI(\rkho-2\rohk)+c_{9}\yI(\skho-2\sohk)+(c_{1001}\roo+c_{1002}\ro+c_{1003}\so+c_{1004}r)\yI\rk\nonumber\\
&&+(c_{1101}\roo+c_{1102}\ro+c_{1103}\so+c_{1104}r)\yI\sk+c_{12}\yI(\pk-\qqk)+c_{13}\yI\qk+c_{14}\yI\tk+c_{15}\yI\rhk\nonumber\\
&&+\big\{c_{1601}\roo^2+c_{1602}\roo\ro+c_{1603}\roo\so+c_{1604}\roo r+c_{1605}\rooho+c_{1606}\ro^2+c_{1607}\ro\so+c_{1608}\so^2\nonumber\\
&&+c_{1609}\qoo+c_{1610}\roho+c_{1611}\soho+c_{1612}\ro r+c_{1613}\so r+c_{1614}(\po-\qqo)+c_{1615}\qo+c_{1616}\to\nonumber\\
&&+c_{1617}r_{|0}+c_{1618}r^2+c_{1619}p+c_{1620}q+c_{1621}t\big\}\bI\yk+\big\{c_{1701}\roo^2+c_{1702}\roo\ro+c_{1703}\roo\so+c_{1704}\roo r\nonumber\\
&&+c_{1705}\rooho+c_{1706}\ro^2+c_{1707}\ro\so+c_{1708}\so^2+c_{1709}\qoo+c_{1710}\roho+c_{1711}\soho+c_{1712}\ro r+c_{1713}\so r\nonumber\\
&&+c_{1714}(\po-\qqo)+c_{1715}\qo+c_{1716}\to+c_{1717}r_{|0}+c_{1718}r^2+c_{1719}p+c_{1720}q+c_{1721}t\big\}\bI\bk+c_{18}\bI(\roohk\nonumber\\
&&-\rkoho)+(c_{1901}\roo+c_{1902}\ro+c_{1903}\so+c_{1904}r)\bI\rko+(c_{2001}\roo+c_{2002}\ro+c_{2003}\so+c_{2004}r)\bI\sko\nonumber\\
&&+c_{21}(\qok-2\qko)+c_{22}\bI(\rkho-2\rohk)+c_{23}\bI(\skho-2\sohk)+(c_{2401}\roo+c_{2402}\ro+c_{2403}\so\nonumber\\
&&+c_{2404}r)\bI\rk+(c_{2501}\roo+c_{2502}\ro+c_{2503}\so+c_{2504}r)\bI\sk+c_{26}\bI(\pk-\qqk)+c_{27}\bI\qk+c_{28}\bI\tk\nonumber\\
&&+c_{29}\bI\rhk+c_{30}\sIoho\yk+c_{31}\sIoho\bk+(c_{3201}\roo+c_{3202}\ro+c_{3203}\so+c_{3204}r)\rIo\yk+(c_{3301}\roo\nonumber\\
&&+c_{3302}\ro+c_{3303}\so+c_{3304}r)\rIo\bk+c_{34}\rIo\rko+c_{35}\rIo\rk+c_{36}\rIo\sk+(c_{3701}\roo+c_{3702}\ro+c_{3703}\so\nonumber\\
&&+c_{3704}r)\sIo\yk+(c_{3801}\roo+c_{3802}\ro+c_{3803}\so+c_{3804}r)\sIo\bk+c_{39}\sIo\sko+c_{40}\sIo\rk+c_{41}\sIo\sk\nonumber\\
&&+c_{42}\tIo\yk+c_{43}\tIo\bk+c_{44}(\sIkho-2\sIohk)+c_{45}(\rIho+\sIho)\yk+c_{46}(\rIho+\sIho)\bk+(c_{4701}\roo\nonumber\\
&&+c_{4702}\ro+c_{4703}\so+c_{4704}r)\rIk+c_{48}\tIk+(c_{4901}\roo+c_{4902}\ro+c_{4903}\so+c_{4904}r)\rI\yk+(c_{5001}\roo\nonumber\\
&&+c_{5002}\ro+c_{5003}\so+c_{5004}r)\rI\bk+c_{51}\rI\rko+c_{52}\rI\sko+c_{53}\rI\rk+c_{54}\rI\sk+(c_{5501}\roo+c_{5502}\ro\nonumber\\
&&+c_{5503}\so+c_{5504}r)\sI\yk+(c_{5601}\roo+c_{5602}\ro+c_{5603}\so+c_{5604}r)\sI\bk+c_{57}\sI\rko+c_{58}\sI\sko+c_{59}\sI\rk\nonumber\\
&&+c_{60}\sI\sk+c_{61}(\qI+\tI)\yk+c_{62}(\qI+\tI)\bk+c_{63}(\rIhk+\sIhk),
\end{eqnarray}
where $\RIk$ are the Riemann tensor of $\a$, and
\begin{eqnarray*}
&c_{101}=\f{3b^2\a^2-8b\a\b+10\b^2}{9(b\a+\b)^2(b\a-\b)^2},\quad c_{102}=\f{2\a(9b^3\a^3-38b^2\a^2\b+28b\a\b^2-9\b^3)}{9b(b\a+\b)^2(b\a-\b)^3},\quad
c_{103}=-\f{2\a(b^2\a^2+b\a\b+3\b^2)}{3b(b\a+\b)(b\a-\b)^3},\\
&c_{104}=\f{2\a^3(9b^2\a^2-16b\a\b+17\b^2)}{9b(b\a+\b)^2(b\a-\b)^3},\quad
c_{105}=-\f{b\a-2\b}{3(b\a+\b)(b\a-\b)},\quad
c_{106}=\f{\a(2b\a-\b)(21b^4\a^4-37b^3\a^3\b+54b^2\a^2\b^2-9b\a\b^3-9\b^4)}{9b^3(b\a+\b)^2(b\a-\b)^4},\\
&c_{107}=\f{2\a(2b^4\a^4+17b^3\a^3\b-6b^2\a^2\b^2-6b\a\b^3+3\b^4)}{3b^3(b\a+\b)(b\a-\b)^4},\quad
c_{108}=\f{\a(2b^3\a^3+5b^2\a^2\b-3b\a\b^2+\b^3)}{b^3(b\a-\b)^4},\quad c_{109}=\f{8\a^2(b\a-2\b)}{3(b\a+\b)(b\a-\b)^2},\\
&c_{110}=-\f{\a(4b^2\a^2-5b\a\b+3\b^2)}{3b(b\a+\b)(b\a-\b)^2},\quad c_{111}=-\f{\a\b}{b(b\a-\b)^2},\quad
c_{112}=\f{4\a^3(9b^4\a^4-41b^3\a^3\b+31b^2\a^2\b^2-6b\a\b^3-3\b^4)}{9b^3(b\a+\b)^2(b\a-\b)^4},\\
&c_{113}=-\f{4\a^3(3b^3\a^3+b\a\b^2+\b^3)}{3b^3(b\a+\b)(b\a-\b)^4},\quad c_{114}=-\f{4\a^3(b\a-2\b)}{3b(b\a+\b)(b\a-\b)^2},\quad
c_{115}=\f{4\a^3(4b^2\a^2-5b\a\b+3\b^2)}{3b(b\a+\b)(b\a-\b)^3},\quad c_{116}=\f{4\a^3\b}{b(b\a-\b)^3},\\
&c_{117}=-\f{2\a^3(b\a-2\b)}{3b(b\a+\b)(b\a-\b)^2},\quad c_{118}=\f{4\a^6(6b^2\a^2-8b\a\b+7\b^2)}{9b^2(b\a+\b)^2(b\a-\b)^4},\quad
c_{119}=-\f{2\a^4(4b^2\a^2-5b\a\b+3\b^2)}{3b^2(b\a+\b)(b\a-\b)^3},\\
&c_{120}=\f{4\a^4(2b^2\a^2-b\a\b+3\b^2)}{3b^2(b\a+\b)(b\a-\b)^3},\quad c_{121}=\f{2\a^4\b}{b^2(b\a-\b)^3},\quad
c_{201}=-\f{b(3b^3\a^3-6b^2\a^2\b+31b\a\b^2-8\b^3)}{9\a(b\a+\b)^3(b\a-\b)^3},\\
&c_{202}=-\f{2(9b^5\a^5-70b^4\a^4\b+31b^3\a^3\b^2-47b^2\a^2\b^3+26b\a\b^4-9\b^5)}{9b\a(b\a+\b)^3(b\a-\b)^4},\quad
c_{203}=\f{2(b^4\a^4+13b^3\a^3\b+14b^2\a^2\b^2-b\a\b^3+3\b^4)}{3b\a(b\a+\b)^2(b\a-\b)^4},\\
&c_{204}=-\f{2\a(6b^4\a^4-13b^3\a^3\b+57b^2\a^2\b^2-5b\a\b^3+15\b^4)}{9b(b\a+\b)^3(b\a-\b)^4},\quad
c_{205}=\f{b(b^2\a^2-4b\a\b+\b^2)}{3\a(b\a+\b)^2(b\a-\b)^2},\\
&c_{206}=-\f{3b^7\a^7+b^6\a^6\b+391b^5\a^5\b^2-305b^4\a^4\b^3
+97b^3\a^3\b^4-9b^2\a^2\b^5-27b\a\b^6+9\b^7}{9b^3\a(b\a+\b)^3(b\a-\b)^5},\\
&c_{207}=\f{2(3b^6\a^6-62b^5\a^5\b-31b^4\a^4\b^2-2b^3\a^3\b^3+3b^2\a^2\b^4+12b\a\b^5-3\b^6)}{3b^3\a(b\a+\b)^2(b\a-\b)^5},\quad
c_{208}=-\f{(b\a+\b)(3b^3\a^3+13b^2\a^2\b-7b\a\b^2+\b^3)}{b^3\a(b\a-\b)^5},\\
&c_{209}=\f{-4(3b^3\a^3-11b^2\a^2\b+2b\a\b^2+4\b^3)}{3(b\a+\b)^2(b\a-\b)^3},\quad
c_{210}=-\f{(b^3\a^3-13b^2\a^2\b+7b\a\b^2-3\b^3)\b}{3b\a(b\a+\b)^2(b\a-\b)^3},\quad
c_{211}=\f{(b\a+\b)\b}{b\a(b\a-\b)^3},\\
&c_{212}=\f{4\a(3b^6\a^6+24b^5\a^5\b-36b^4\a^4\b^2+136b^3\a^3\b^3-32b^2\a^2\b^4-24b\a\b^5+9\b^6)}{9b^3(b\a+\b)^3(b\a-\b)^5},\\
&c_{213}=\f{4\a(6b^5\a^5-9b^4\a^4\b+28b^3\a^3\b^2+23b^2\a^2\b^3-11b\a\b^4+3\b^5)}{3b^3(b\a+\b)^2(b\a-\b)^5},\quad
c_{214}=\f{2\a(b\a-3\b)(4b^2\a^2-b\a\b-2\b^2)}{3b(b\a+\b)^2(b\a-\b)^3},\\
&c_{215}=-\f{2\a^2(4b^3\a^3-11b^2\a^2\b+26b\a\b^2-7\b^3)}{3(b\a+\b)^2(b\a-\b)^4},\quad c_{216}=-\f{6\a^2\b}{(b\a-\b)^4},\quad
c_{217}=-\f{2\a(b^3\a^3-b^2\a^2\b-2b\a\b^2+6\b^3)}{3b(b\a+\b)^2(b\a-\b)^3},\\
&c_{218}=-\f{2\a^4(12b^4\a^4-17b^3\a^3\b+37b^2\a^2\b^2+9b\a\b^3+39\b^4)}{9b^2(b\a+\b)^3(b\a-\b)^5},\quad
c_{219}=\f{\a^2(8b^4\a^4-16b^3\a^3\b+25b^2\a^2\b^2-2b\a\b^3-3\b^4)}{3b^2(b\a+\b)^2(b\a-\b)^4},\\
&c_{220}=-\f{2\a^2(4b^4\a^4-2b^3\a^3\b+23b^2\a^2\b^2+2b\a\b^3-3\b^4)}{3b^2(b\a+\b)^2(b\a-\b)^4},\quad
c_{221}=-\f{\a^2(4b\a-\b)\b}{b^2(b\a-\b)^4},\quad
c_{301}=-\f{2(b^3\a^3-13b^2\a^2\b+6b\a\b^2-4\b^3)}{9(b\a+\b)^3(b\a-\b)^3},\\
&c_{302}=-\f{2\a(26b^4\a^4-27b^3\a^3\b+91b^2\a^2\b^2-39b\a\b^3+9\b^4)}{9b(b\a+\b)^3(b\a-\b)^4},\quad
c_{303}=-\f{2\a(2b^3\a^3+5b^2\a^2\b+2b\a\b^2+\b^3)}{b(b\a+\b)^2(b\a-\b)^4},\\
&c_{304}=-\f{2\a^3(b^3\a^3-45b^2\a^2\b+17b\a\b^2-33\b^3)}{9b(b\a+\b)^3(b\a-\b)^4},\quad
c_{305}=\f{2(b^2\a^2-b\a\b+\b^2)}{3(b\a+\b)^2(b\a-\b)^2},\\
&c_{306}=\f{\a(7b^6\a^6+232b^5\a^5\b-221b^4\a^4\b^2+232b^3\a^3\b^3-99b^2\a^2\b^4+9\b^6)}{9b^3(b\a+\b)^3(b\a-\b)^5},\\
&c_{307}=\f{2\a(26b^5\a^5+34b^4\a^4\b+29b^3\a^3\b^2-9b^2\a^2\b^3-3b\a\b^4+3\b^5)}{3b^3(b\a+\b)^2(b\a-\b)^5},\quad
c_{308}=\f{\a(b\a+\b)(13b^2\a^2-4b\a\b+\b^2)}{b^3(b\a-\b)^5},\\
&c_{309}=-\f{4\a^2(5b^2\a^2-5b\a\b+2\b^2)}{3(b\a+\b)^2(b\a-\b)^3},\quad
c_{310}=\f{\a(b^3\a^3-13b^2\a^2\b+7b\a\b^2-3\b^3)}{3b(b\a+\b)^2(b\a-\b)^3},\quad
c_{311}=-\f{\a(b\a+\b)}{b(b\a-\b)^3},\\
&c_{312}=-\f{\a^3(47b^5\a^5-34b^4\a^4\b+428b^3\a^3\b^2-214b^2\a^2\b^3+69b\a\b^4+24\b^5)}{9b^3(b\a+\b)^3(b\a-\b)^5},\quad
c_{313}=-\f{\a^3(9b^4\a^4+77b^3\a^3\b+51b^2\a^2\b^2+15b\a\b^3+8\b^4)}{3b^3(b\a+\b)^2(b\a-\b)^5},\\
&c_{314}=\f{2\a^3(5b^2\a^2-5b\a\b+2\b^2)}{3b(b\a+\b)^2(b\a-\b)^3},\quad
c_{315}=-\f{2\a^3(2b^3\a^3-9b^2\a^2\b+4b\a\b^2-\b^3)}{b(b\a+\b)^2(b\a-\b)^4},\quad
c_{316}=\f{2\a^3(2b\a+\b)}{b(b\a-\b)^4},\\
&c_{317}=\f{2\a^3(b^2\a^2-b\a\b+4\b^2)}{3b(b\a+\b)^2(b\a-\b)^3},\quad
c_{318}=-\f{2\a^6(b^3\a^3-35b^2\a^2\b+7b\a\b^2-53\b^3)}{9b^2(b\a+\b)^3(b\a-\b)^5},\quad
c_{319}=\f{\a^4(2b^3\a^3-9b^2\a^2\b+4b\a\b^2-\b^3)}{b^2(b\a+\b)^2(b\a-\b)^4},\\
&c_{320}=\f{2\a^4(7b^2\a^2+\b^2)\b}{b^2(b\a+\b)^2(b\a-\b)^4},\quad
c_{321}=\f{\a^4(2b\a+\b)}{b^2(b\a-\b)^4},\quad
c_{4}=\f{2(b\a-2\b)}{3(b\a+\b)(b\a-\b)},\quad
c_{501}=\f{2(2b\a-\b)\b}{9(b\a+\b)^2(b\a-\b)^2},\\
&c_{502}=-\f{\a(3b^3\a^3-b^2\a^2\b+11b\a\b^2-9\b^3)}{9b(b\a+\b)^2(b\a-\b)^3},\quad
c_{503}=-\f{\a(b^2\a^2+4b\a\b-3\b^2)}{3b(b\a+\b)(b\a-\b)^3},\quad
c_{504}=-\f{2\a^3(3b^2\a^2-4b\a\b-\b^2)}{9b(b\a+\b)^2(b\a-\b)^3},\\
&c_{601}=\f{2(2b\a-\b)\b}{(b\a+\b)(b\a-\b)^3},\quad
c_{602}=\f{\a(b^3\a^3-13b^2\a^2\b+7b\a\b^2-3\b^3)}{b(b\a+\b)(b\a-\b)^4},\quad
c_{603}=-\f{3\a(b\a+\b)^2}{b(b\a-\b)^4},\\
&c_{604}=\f{2\a^3(3b^2\a^2-7b\a\b+8\b^2)}{b(b\a+\b)(b\a-\b)^4},\quad
c_{7}=-\f{4\a^2(b\a-2\b)}{3(b\a+\b)(b\a-\b)^2},\quad
c_{8}=-\f{\a(4b^2\a^2-5b\a\b +3\b^2)}{3b(b\a+\b)(b\a-\b)^2},\quad
c_{9}=-\f{\a\b}{b(b\a-\b)^2},\\
&c_{1001}=\f{\a(3b^3\a^3-13b^2\a^2\b-19b\a\b^2+9\b^3)}{9b(b\a+\b)^2(b\a-\b)^3},\quad
c_{1002}=-\f{\a(39b^5\a^5-89b^4\a^4\b-17b^3\a^3\b^2+18b^2\a^2\b^3-36b\a\b^4+9\b^5)}{9b^3(b\a+\b)^2(b\a-\b)^4},\\
&c_{1003}=\f{\a(b^4\a^4-2b^3\a^3\b+27b^2\a^2\b^2+15b\a\b^3-3\b^4)}{3b^3(b\a+\b)(b\a-\b)^4},\quad
c_{1004}=-\f{\a^3(48b^4\a^4-115b^3\a^3\b+26b^2\a^2\b^2+141b\a\b^3-24\b^4)}{9b^3(b\a+\b)^2(b\a-\b)^4},\\
&c_{1101}=\f{\a(b^2\a^2-8b\a\b+3\b^2)}{3b(b\a+\b)(b\a-\b)^3},\quad
c_{1102}=-\f{\a(11b^4\a^4-40b^3\a^3\b+27b^2\a^2\b^2-15b\a\b^3+3\b^4)}{3b^3(b\a+\b)(b\a-\b)^4},\quad
c_{1103}=\f{\a(b^3\a^3+b^2\a^2\b+6b\a\b^2-\b^3)}{b^3(b\a-\b)^4},\\
&c_{1104}=-\f{\a^3(12b^3\a^3-45b^2\a^2\b+55b\a\b^2-8\b^3)}{3b^3(b\a+\b)(b\a-\b)^4},\quad
c_{12}=-\f{4\a^3(b\a-2\b)}{3b(b\a+\b)(b\a-\b)^2},\quad
c_{13}=-\f{2\a^3(4b^2\a^2-5b\a\b+3\b^2)}{3b(b\a+\b)(b\a-\b)^3},\\
&c_{14}=-\f{2\a^3\b}{b(b\a-\b)^3},\quad
c_{15}=\f{4\a^3(b\a-2\b)}{3b(b\a+\b)(b\a-\b)^2},\quad
c_{1601}=\f{4(b^2\a^2+4\b^2)\b}{9(b\a+\b)^3(b\a-\b)^3},\quad
c_{1602}=-\f{2\a(7b^3\a^3+43b^2\a^2\b-17b\a\b^2+27\b^3)\b}{9b(b\a+\b)^3(b\a-\b)^4},\\
&c_{1603}=-\f{2\a(7b^2\a^2+14b\a\b+9\b^2)\b}{3b(b\a+\b)^2(b\a-\b)^4},\quad
c_{1604}=\f{4\a^3(3b^2\a^2+5b\a\b+22\b^2)\b}{9b(b\a+\b)^3(b\a-\b)^4},\quad
c_{1605}=\f{2\b^2}{3(b\a+\b)^2(b\a-\b)^2},\\
&c_{1606}=\f{\a(21b^6\a^6-30b^5\a^5\b+25b^4\a^4\b^2+324b^3\a^3\b^3
-153b^2\a^2\b^4-54b\a\b^5+27\b^6)}{9b^3(b\a+\b)^3(b\a-\b)^5},\\
&c_{1607}=-\f{2\a(4b^5\a^5-24b^4\a^4\b-75b^3\a^3\b^2-3b^2\a^2\b^3+27b\a\b^4-9\b^5)}{3b^3(b\a+\b)^2(b\a-\b)^5},\quad
c_{1608}=-\f{\a(5b^3\a^3-37b^2\a^2\b+15b\a\b^2-3\b^3)}{b^3(b\a-\b)^5},\\
&c_{1609}=-\f{4\a^2(b^2\a^2-b\a\b+2\b^2)}{3(b\a+\b)^2(b\a-\b)^3},\quad
c_{1610}=-\f{\a(b^3\a^3-b^2\a^2\b-b\a\b^2+9\b^3)}{3b(b\a+\b)^2(b\a-\b)^3},\quad
c_{1611}=\f{\a(b\a-3\b)}{b(b\a-\b)^3},\\
&c_{1612}=\f{\a^3(63b^5\a^5-104b^4\a^4\b-308b^3\a^3\b^2+134b^2\a^2\b^3-75b\a\b^4-30\b^5)}{9b^3(b\a+\b)^3(b\a-\b)^5},\quad
c_{1613}=\f{\a^3(21b^4\a^4-69b^3\a^3\b-87b^2\a^2\b^2-15b\a\b^3-10\b^4)}{3b^3(b\a+\b)^2(b\a-\b)^5},\\
&c_{1614}=\f{2\a^3(2b^2\a^2-b\a\b+\b^2)}{3b(b\a+\b)^2(b\a-\b)^3},\quad
c_{1615}=\f{2\a^3(b^3\a^3+2b^2\a^2\b-3b\a\b^2+12\b^3)}{3b(b\a+\b)^2(b\a-\b)^4},\quad
c_{1616}=-\f{2\a^3(b\a-4\b)}{b(b\a-\b)^4},\\
&c_{1617}=-\f{2\a^3(2b^2\a^2-b\a\b-5\b^2)}{3b(b\a+\b)^2(b\a-\b)^3},\quad
c_{1618}=\f{8\a^6(b^2\a^2+5b\a\b+14\b^2)\b}{9b^2(b\a+\b)^3(b\a-\b)^5},\quad
c_{1619}=-\f{\a^4(5b^2\a^2-2b\a\b+9\b^2)\b}{3b^2(b\a+\b)^2(b\a-\b)^4},\\
&c_{1620}=\f{2\a^4(7b^2\a^2+8b\a\b+9\b^2)\b}{3b^2(b\a+\b)^2(b\a-\b)^4},\quad
c_{1621}=\f{3\a^4\b}{b^2(b\a-\b)^4},\quad
c_{1701}=-\f{2\a^2(b^2\a^2+9\b^2)}{9(b\a+\b)^3(b\a-\b)^3},\\
&c_{1702}=\f{4\a^3(b\a+9\b)(2b^2\a^2-b\a\b+2\b^2)}{9b(b\a+\b)^3(b\a-\b)^4},\quad
c_{1703}=\f{4\a^3(b\a+2\b)(2b\a+3\b)}{3b(b\a+\b)^2(b\a-\b)^4},\quad
c_{1704}=-\f{4\a^5(2b^2\a^2+5b\a\b+23\b^2)}{9b(b\a+\b)^3(b\a-\b)^4},\\
&c_{1705}=-\f{2\a^2\b}{3(b\a+\b)^2(b\a-\b)^2},\quad
c_{1706}=-\f{\a^3(23b^5\a^5+28b^4\a^4\b+172b^3\a^3\b^2-126b^2\a^2\b^3+45b\a\b^4+18\b^5)}{9b^3(b\a+\b)^3(b\a-\b)^5},\\
&c_{1707}=-\f{2\a^3(27b^4\a^4+35b^3\a^3\b+3b^2\a^2\b^2+9b\a\b^3+6\b^4)}{3b^3(b\a+\b)^2(b\a-\b)^5},\quad
c_{1708}=-\f{\a^3(19b^2\a^2-b\a\b+2\b^2)}{b^3(b\a-\b)^5},\quad
c_{1709}=-\f{4\a^4(b\a-3\b)}{3(b\a+\b)^2(b\a-\b)^3},\\
&c_{1710}=\f{2\a^3(b^2\a^2+3\b^2)}{3b^2(b\a+\b)^2(b\a-\b)^3},\quad
c_{1711}=\f{2\a^3}{b(b\a-\b)^3},\quad
c_{1712}=\f{2\a^5(29b^4\a^4+70b^3\a^3\b-38b^2\a^2\b^2+90b\a\b^3+9\b^4)}{9b^3(b\a+\b)^3(b\a-\b)^5},\\
&c_{1713}=\f{2\a^5(15b^3\a^3+35b^2\a^2\b+27b\a\b^2+3\b^3)}{3b^3(b\a+\b)^2(b\a-\b)^5},\quad
c_{1714}=\f{2\a^5(b\a-3\b)}{3b(b\a+\b)^2(b\a-\b)^3},\quad
c_{1715}=-\f{2\a^5(5b^2\a^2-2b\a\b+9\b^2)}{3b(b\a+\b)^2(b\a-\b)^4},\\
&c_{1716}=-\f{6\a^5}{b(b\a-\b)^4},\quad
c_{1717}=-\f{2\a^5(b\a+3\b)}{3b(b\a+\b)^2(b\a-\b)^3},\quad
c_{1718}=-\f{8\a^8(b^2\a^2+5b\a\b+14\b^2)}{9b^2(b\a+\b)^3(b\a-\b)^5},\quad
c_{1719}=\f{\a^6(5b^2\a^2-2b\a\b+9\b^2)}{3b^2(b\a+\b)^2(b\a-\b)^4},\\
&c_{1720}=-\f{2\a^6(7b^2\a^2+8b\a\b+9\b^2)}{3b^2(b\a+\b)^2(b\a-\b)^4},\quad
c_{1721}=-\f{3\a^6}{b^2(b\a-\b)^4},\quad
c_{18}=\f{2\a^2}{3(b\a+\b)(b\a-\b)},\quad
c_{1901}=-\f{2\a^2\b}{9(b\a+\b)^2(b\a-\b)^2},\\
&c_{1902}=\f{2\a^4(3b\a-\b)}{9(b\a+\b)^2(b\a-\b)^3},\quad
c_{1903}=\f{2\a^4}{3(b\a+\b)(b\a-\b)^3},\quad
c_{1904}=-\f{4\a^5\b}{9b(b\a+\b)^2(b\a-\b)^3},\quad
c_{2001}=-\f{2\a^2\b}{(b\a+\b)(b\a-\b)^3},\\
&c_{2002}=\f{4\a^3(b^2\a^2-2b\a\b+3\b^2)}{b(b\a+\b)(b\a-\b)^4},\quad
c_{2003}=\f{12\a^3\b}{b(b\a-\b)^4},\quad
c_{2004}=\f{2\a^5(3b\a-7\b)}{b(b\a+\b)(b\a-\b)^4},\quad
c_{21}=-\f{4\a^4}{3(b\a+\b)(b\a-\b)^2},\\
&c_{22}=-\f{\a^3(b\a-3\b)}{3b(b\a+\b)(b\a-\b)^2},\quad
c_{23}=\f{\a^3}{b(b\a-\b)^2},\quad
c_{2401}=-\f{2\a^3(3b^2\a^2-4b\a\b-9\b^2)}{9b(b\a+\b)^2(b\a-\b)^3},\\
&c_{2402}=-\f{\a^3(21b^4\a^4-53b^3\a^3\b+18b^2\a^2\b^2+99b\a\b^3-9\b^4)}{9b^3(b\a+\b)^2(b\a-\b)^4},\quad
c_{2403}=\f{\a^3(19b^3\a^3-24b^2\a^2\b-36b\a\b^2+3\b^3)}{3b^3(b\a+\b)(b\a-\b)^4},\\
&c_{2404}=-\f{\a^5(63b^3\a^3-46b^2\a^2\b-105b\a\b^2+12\b^3)}{9b^3(b\a+\b)^2(b\a-\b)^4},\quad
c_{2501}=-\f{2\a^3(b\a-3\b)}{3b(b\a+\b)(b\a-\b)^3},\quad
c_{2502}=-\f{\a^3(11b^3\a^3-30b^2\a^2\b+36b\a\b^2-3\b^3)}{3b^3(b\a+\b)(b\a-\b)^4},\\
&c_{2503}=\f{\a^3(5b^2\a^2-13b\a\b+\b^2)}{b^3(b\a-\b)^4},\quad
c_{2504}=-\f{\a^5(21b^2\a^2-39b\a\b+4\b^2)}{3b^3(b\a+\b)(b\a-\b)^4},\quad
c_{26}=-\f{4\a^5}{3b(b\a+\b)(b\a-\b)^2},\quad
c_{27}=-\f{2\a^5(b\a-3\b)}{3b(b\a+\b)(b\a-\b)^3},\\
&c_{28}=\f{2\a^5}{b(b\a-\b)^3},\quad
c_{29}=\f{4\a^5}{3b(b\a+\b)(b\a-\b)^2},\quad
c_{30}=-\f{2(b\a-2\b)}{(b\a-\b)^2},\quad
c_{31}=-\f{2\a^2}{(b\a-\b)^2},\quad
c_{3201}=\f{2\b^2}{3(b\a+\b)^2(b\a-\b)^2},\\
&c_{3202}=-\f{\a(b^3\a^3-b^2\a^2\b-b\a\b^2+9\b^3)}{3b(b\a+\b)^2(b\a-\b)^3},\quad
c_{3203}=\f{\a(b\a-3\b)}{b(b\a-\b)^3},\quad
c_{3204}=-\f{2\a^3(2b^2\a^2-b\a\b-5\b^5)}{3b(b\a+\b)^2(b\a-\b)^3},\quad
c_{3301}=-\f{2\a^2\b}{3(b\a+\b)^2(b\a-\b)^2},\\
&c_{3302}=\f{2\a^3(b^2\a^2+3\b^2)}{3b(b\a+\b)^2(b\a-\b)^3},\quad
c_{3303}=\f{2\a^3}{b(b\a-\b)^3},\quad
c_{3304}=-\f{2\a^5(b\a+3\b)}{3b(b\a+\b)^2(b\a-\b)^3},\quad
c_{34}=-\f{2\a^2}{3(b\a+\b)(b\a-\b)},\\
&c_{35}=-\f{\a^3(b\a-3\b)}{3b(b\a+\b)(b\a-\b)^2},\quad
c_{36}=\f{\a^3}{b(b\a-\b)^2},\quad
c_{3701}=\f{6\b^2}{(b\a+\b)(b\a-\b)^3},\quad
c_{3702}=\f{3\a(b\a-3\b)(3b^2\a^2-\b^2)}{b(b\a+\b)(b\a-\b)^4},\\
&c_{3703}=\f{9\a(b^2\a^2-4b\a\b+\b^2)}{b(b\a-\b)^4},\quad
c_{3704}=\f{12\a^3\b^2}{b(b\a+\b)(b\a-\b)^4},\quad
c_{3801}=-\f{6\a^2\b}{(b\a+\b)(b\a-\b)^3},\quad
c_{3802}=\f{6\a^3(2b^2\a^2-b\a\b+\b^2)}{b(b\a+\b)(b\a-\b)^4},\\
&c_{3803}=\f{6\a^3(2b\a+\b)}{b(b\a-\b)^4},\quad
c_{3804}=-\f{12\a^5\b}{b(b\a+\b)(b\a-\b)^4},\quad
c_{39}=\f{6\a^2(b\a-3\b)}{(b\a-\b)^3},\quad
c_{40}=-\f{3\a^3(3b\a-5\b)}{b(b\a-\b)^3},\quad
c_{41}=-\f{3\a^3(3b\a-5\b)}{b(b\a-\b)^3},\\
&c_{42}=\f{4\a^2(b\a-2\b)}{(b\a-\b)^3},\quad
c_{43}=\f{4\a^4}{(b\a-\b)^3},\quad
c_{44}=-\f{2\a^2}{b\a-\b},\quad
c_{45}=\f{\a(2b\a-3\b)}{b(b\a-\b)^2},\quad
c_{46}=\f{\a^3}{b(b\a-\b)^2},\quad
c_{4701}=\f{2\a^2}{3(b\a+\b)(b\a-\b)},\\
&c_{4702}=\f{2\a^3(b\a-3\b)}{3b(b\a+\b)(b\a-\b)^2},\quad
c_{4703}=-\f{2\a^3}{b(b\a-\b)^2},\quad
c_{4704}=\f{4\a^5}{3b(b\a+\b)(b\a-\b)^2},\quad
c_{48}=-\f{4\a^4}{(b\a-\b)^2},\\
&c_{4901}=\f{\a(b^3\a^3-b^2\a^2\b-7b\a\b^2-9\b^3)}{3b(b\a+\b)^2(b\a-\b)^3},\quad
c_{4902}=-\f{\a(25b^5\a^5-31b^4\a^4\b-69b^3\a^3\b^2+30b^2\a^2\b^3+18b\a\b^4-9\b^5)}{3b^3(b\a+\b)^2(b\a-\b)^4},\\
&c_{4903}=-\f{3\a(3b^3\a^3-9b^2\a^2\b+4b\a\b^2-\b^3)}{b^3(b\a-\b)^4},\quad
c_{4904}=\f{\a^4(5b^2\a^2-14b\a\b-27\b^2)\b}{3b^2(b\a+\b)^2(b\a-\b)^4},\quad
c_{5001}=-\f{2\a^3(b^2\a^2-3b\a\b-6\b^2)}{3b(b\a+\b)^2(b\a-\b)^3},\\
&c_{5002}=-\f{\a^3(19b^4\a^4+5b^3\a^3\b-12b^2\a^2\b^2+21b\a\b^3+3\b^4)}{3b^3(b\a+\b)^2(b\a-\b)^4},\quad
c_{5003}=-\f{\a^3(3b^2\a^2+5b\a\b+\b^2)}{b^3(b\a-\b)^4},\quad
c_{5004}=-\f{\a^6(5b^2\a^2-14b\a\b-27\b^2)}{3b^2(b\a+\b)^2(b\a-\b)^4},\\
&c_{51}=-\f{\a^3(b\a-3\b)}{3b(b\a+\b)(b\a-\b)^2},\quad
c_{52}=-\f{3\a^3(3b\a-5\b)}{b(b\a-\b)^3},\quad
c_{53}=\f{\a^3(25b^3\a^3-12b^2\a^2\b-39b\a\b^2+6\b^3)}{3b^3(b\a+\b)(b\a-\b)^3},\quad
c_{54}=\f{\a^3(9b^2\a^2-15b\a\b+2\b^2)}{b^3(b\a-\b)^3},\\
&c_{5501}=-\f{\a(b^2\a^2-2b\a\b+3\b^2)}{b(b\a+\b)(b\a-\b)^3},\quad
c_{5502}=-\f{3\a(3b^4\a^4-8b^3\a^3\b+b^2\a^2\b^2+3b\a\b^3-\b^4)}{b^3(b\a+\b)(b\a-\b)^4},\quad
c_{5503}=-\f{\a(7b^3\a^3-25b^2\a^2\b+12b\a\b^2-3\b^3)}{b^3(b\a-\b)^4},\\
&c_{5504}=\f{3\a^4(b\a-3\b)\b}{b^2(b\a+\b)(b\a-\b)^4},\quad
c_{5601}=-\f{2\a^3(b\a-2\b)}{b(b\a+\b)(b\a-\b)^3},\quad
c_{5602}=-\f{\a^3(9b^3\a^3-10b^2\a^2\b+6b\a\b^2+\b^3)}{b^3(b\a+\b)(b\a-\b)^4},\\
&c_{5603}=-\f{\a^3(3b^2\a^2+5b\a\b+\b^2)}{b^3(b\a-\b)^4},\quad
c_{5604}=-\f{3\a^6(b\a-3\b)}{b^2(b\a+\b)(b\a-\b)^4},\quad
c_{57}=\f{\a^3}{b(b\a-\b)^2},\quad
c_{58}=-\f{3\a^3(3b\a-5\b)}{b(b\a-\b)^3},\\
&c_{59}=\f{\a^3(9b^2\a^2-15b\a\b+2\b^2)}{b^3(b\a-\b)^3},\quad
c_{60}=\f{\a^3(7b\a-\b)(b\a-2\b)}{b^3(b\a-\b)^3},\quad
c_{61}=-\f{2\a^3\b}{b(b\a-\b)^3},\quad
c_{62}=\f{2\a^5}{b(b\a-\b)^3},\quad
c_{63}=-\f{2\a^3}{b(b\a-\b)}.
\end{eqnarray*}
\end{proposition}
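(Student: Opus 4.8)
The plan is to derive the formula directly from Berwald's formula, by first writing $F$ as a general $\ab$-metric, computing its spray coefficients, and then differentiating. Setting $s=\b/\a$ one has $b\a+\b=(b+s)\a$, so
\[
F=\f{(b\a+\b)^2}{\a}=(b+s)^2\a=\a\,\phi(b^2,s),\qquad \phi(b^2,s)=(b+s)^2,\ \ b=\sqrt{b^2}.
\]
Thus $F$ is a general $\ab$-metric whose defining function $\phi$ depends on both $b^2=b^2(x)$ and $s$. I would then invoke the general spray-coefficient formula for metrics of the form $\a\,\phi(b^2,s)$ established in \cite{yct-zhm-onan}, which expresses $G^i$ as the spray $\G$ of $\a$ plus correction terms built out of $\roo,\so,\sIo,\ri,\si$, the vectors $b^i,y^i$, and rational functions of $\a,\b,b$ coming from $\phi$ and its partial derivatives (in particular $\pt$ and its higher analogues). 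Substituting the explicit $\phi=(b+s)^2$ produces the spray coefficients $G^i$ of the singular square metric in closed form.

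Next I would feed these $G^i$ into Berwald's formula
\[
\RIkF=2G^i_{x^k}-G^i_{x^ly^k}y^l+2G^lG^i_{y^ky^l}-G^i_{y^l}G^l_{y^k}.
\]
The $y$-derivatives are purely algebraic and generate the rational coefficients in $\a,\b,b$. The essential point is the $x$-derivatives: differentiating $\roo,\so,\sIo,\dots$ with respect to $x$ and replacing the ordinary derivatives of the coefficients of $\a$ by its Christoffel symbols turns them into covariant derivatives such as $\rooho,\soho,\sIkho,\rkho$, while the second $x$-derivatives of $\G$ together with the quadratic terms $2G^lG^i_{y^ky^l}-G^i_{y^l}G^l_{y^k}$ reassemble into the Riemann tensor $\RIk$ of $\a$. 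This is precisely why the stated identity has the shape $\RIkF=\RIk+\{\cdots\}$, with the braces containing only the quantities $\roo,\ro,\so,r,\rij,\sij$, the auxiliary tensors $p_{ij},q_{ij},t_{ij}$, and their first covariant derivatives.

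Finally I would collect every term according to its tensorial type in the free pair $(i,k)$: the scalar$\times\dIk$ part, the $\yI\yk,\,\yI\bk,\,\bI\yk,\,\bI\bk$ parts, the parts carrying a single factor $\rko,\sko,\rk,\sk$ or a covariant-derivative object such as $\sIkho$ or $\rkho$, and so on. Matching, for each such tensor slot, the coefficient of every monomial in the $r,s,p,q,t$-quantities against a basis of rational functions of $\a,\b,b$ reads off the coefficients $c_{101},\dots,c_{63}$.

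The main obstacle is not conceptual but the sheer combinatorial size of the expansion: Berwald's formula applied to a two-variable $\phi$ produces thousands of intermediate terms, and sorting them into the fifty-odd tensor slots with their rational coefficients is entirely infeasible by hand, which is exactly why the author delegates the bookkeeping to Maple. A secondary technical point is that the coefficients carry denominators $(b\a\pm\b)$, so every manipulation must be carried out away from the singular directions $s=\pm b$ at which $F$ degenerates; since the resulting identity is an equality of rational functions valid on the open region where $F$ is regular, it then holds wherever both sides are defined.
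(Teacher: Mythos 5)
Your plan — realize $F=\a\,\phi(b^2,s)$ with $\phi=(b+s)^2$, take the spray coefficients of a general $\ab$-metric from \cite{yct-zhm-onan}, feed them into Berwald's formula, and let a computer algebra system collect the terms by tensor slot — is exactly how the paper obtains this proposition; the author likewise states that the calculation is ``elementary and mechanical'' and is carried out in Maple, with only the final result recorded in the appendix. Your account is correct and adds nothing that conflicts with, or departs from, the paper's route.
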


\begin{proposition}
The Ricci curvature of singular square metric (\ref{singularBerwald}) is given by
\begin{eqnarray}\label{yanega;dngadg}
\RicooF&=&\Ricoo+C_1\roo^2+C_2\rooho+C_3\roo\ro+C_4\roo\so+C_5\roo\rIi+C_6\roo r+C_7\qoo+C_8\too+C_9\ro^2\nonumber\\
&&+C_{10}\ro\so+C_{11}\so^2+C_{12}\roohb+C_{13}\roho+C_{14}\soho+C_{15}\ro\rIi+C_{16}\so\rIi+C_{17}\ro r+C_{18}\so r\nonumber\\
&&+C_{19}\po+C_{20}\qqo+C_{21}\qo+C_{22}\to+C_{23}\sIohi+C_{24}\rohb+C_{25}\sohb+C_{26}r_{|0}+C_{27}r\rIi\nonumber\\
&&+C_{28}r^2+C_{29}\tIi+C_{30}p+C_{31}q+C_{32}t+C_{33}\rIhi+C_{34}\sIhi+C_{35}\rhb,
\end{eqnarray}
where $\Ricoo$ is the Ricci curvature of $\a$, and
\begin{eqnarray*}
&C_1=\f{(3n-5)b^2\a^2-8(n-1)b\a\b+10(n-2)\b^2}{9(b\a+\b)^2(b\a-\b)^2},\quad C_2=-\f{(n-1)b\a-2(n-2)\b}{3(b\a+\b)(b\a-\b)},\\
&C_3=\f{2\a\big\{(9n-5)b^3\a^3-2(19n-34)b^2\a^2\b+2(14n-25)b\a\b^2-9(n-1)\b^3\big\}}{9b(b\a+\b)^2(b\a-\b)^3},\\
&C_4=-\f{2\a\big\{(n-5)b^2\a^2+(n-3)b\a\b+3(n-1)\b^2\big\}}{3b(b\a+\b)(b\a-\b)^3},\quad C_5=\f{2\a^2}{3(b\a+\b)(b\a-\b)},\\
&C_6=\f{2\a^3\big\{(9n-25)b^2\a^2-2(8n-11)b\a\b+(17n-19)\b^2\big\}}{9b(b\a+\b)^2(b\a-\b)^3},\quad C_7=\f{4\a^2\big\{(2n-1)b\a-(4n-7)\b\big\}}{3(b\a+\b)(b\a-\b)^2},\quad C_8=-\f{4\a^2(b\a-3\b)}{(b\a-\b)^3},\\
&C_9=\f{\a\big\{2(21n-64)b^5\a^5-(95n-167)b^4\a^4\b
+(145n-143)b^3\a^3\b^2-36(2n-3)b^2\a^2\b^3-9(n-1)b\a\b^4+9(n-1)\b^5\big\}}{9b^3(b\a+\b)^2(b\a-\b)^4},\\
&C_{10}=\f{2\a\big\{2(n-14)b^4\a^4+(17n-15)b^3\a^3\b-6(n-7)b^2\a^2\b^2-6(n-1)b\a\b^3+3(n-1)\b^4\big\}}
{3b^3(b\a+\b)(b\a-\b)^4},\\
&C_{11}=\f{\a\big\{2(n-8)b^3\a^3+5(n+3)b^2\a^2\b-3(n-1)b\a\b^2+(n-1)\b^3\big\}}{b^3(b\a-\b)^4},\quad C_{12}=\f{2\a^2}{3(b\a+\b)(b\a-\b)},\\
&C_{13}=-\f{\a\big\{2(2n-5)b^2\a^2-(5n-7)b\a\b+3(n-1)\b^2\big\}}{3b(b\a+\b)(b\a-\b)^2},\quad C_{14}=\f{\a\big\{2b\a-(n-1)\b\big\}}{b(b\a-\b)^2},\quad
C_{15}=\f{2\a^3(b\a-3\b)}{3b(b\a+\b)(b\a-\b)^2},\\
&C_{16}=-\f{2\a^3}{b(b\a-\b)^2},\quad C_{17}=\f{2\a^3\big\{(18n-31)b^4\a^4-2(41n-101)b^3\a^3\b+62(n-2)b^2\a^2\b^2
-12(n+6)b\a\b^3-3(2n-7)\b^4\big\}}{9b^3(b\a+\b)^2(b\a-\b)^4},\\
&C_{18}=-\f{2\a^3\big\{(6n-35)b^3\a^3+13b^2\a^2\b+(2n+31)b\a\b^2+(2n-7)\b^3\big\}}{3b^3(b\a+\b)(b\a-\b)^4},\quad
C_{19}=-\f{2\a^3\big\{(2n-1)b\a-(4n-5)\b\big\}}{3b(b\a+\b)(b\a-\b)^2},\\
&C_{20}=\f{2\a^3\big\{(2n-5)b\a-(4n-5)\b\big\}}{3b(b\a+\b)(b\a-\b)^2},\quad
C_{21}=\f{2\a^3\big\{(8n-37)b^2\a^2-2(5n-11)b\a\b+3(2n+9)\b^2\big\}}{3b(b\a+\b)(b\a-\b)^3},\\
&C_{22}=-\f{2\a^3\big\{11b\a-(2n+9)\b\big\}}{b(b\a-\b)^3},\quad C_{23}=\f{4\a^2}{b\a-\b},\quad C_{24}=\f{2\a^3(b\a-3\b)}{3b(b\a+\b)(b\a-\b)^2},\\
&C_{25}=-\f{2\a^3}{b(b\a-\b)^2},\quad C_{26}=-\f{2\a^3\big\{(n-1)b\a-2(n-2)\b\big\}}{3b(b\a+\b)(b\a-\b)^2},\quad C_{27}=\f{4\a^5}{3b(b\a+\b)(b\a-\b)^2},\\
&C_{28}=\f{2\a^5\big\{(12n-55)b^3\a^3-8(2n-5)b^2\a^2\b+(14n+23)b\a\b^2-6\b^3\big\}}{9b^3(b\a+\b)^2(b\a-\b)^4},\quad C_{29}=-\f{4\a^4}{(b\a-\b)^2},\\
&C_{30}=-\f{2\a^3\big\{(4n-17)b^3\a^3-5(n-2)b^2\a^2\b+3(n+4)b\a\b^2-3\b^3\big\}}{3b^3(b\a+\b)(b\a-\b)^3},\quad
C_{31}=\f{4\a^3\big\{(2n-17)b^3\a^3-(n-8)b^2\a^2\b+3(n+4)b\a\b^2-3\b^3\big\}}{3b^3(b\a+\b)(b\a-\b)^3},\\
&C_{32}=-\f{2\a^3\big\{3b^2\a^2-(n+5)b\a\b+\b^2\big\}}{b^3(b\a-\b)^3},\quad
C_{33}=-\f{2\a^3}{b(b\a-\b)},\quad C_{34}=-\f{2\a^3}{b(b\a-\b)},\quad C_{35}=\f{4\a^5}{3b(b\a+\b)(b\a-\b)^2}.
\end{eqnarray*}
\end{proposition}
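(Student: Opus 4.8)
The cleanest route exploits the identity $\RicooF={}^FR^m{}_m$. Indeed ${}^FR^m{}_m$ is positively homogeneous of degree two in $y$, so Euler's relation gives $({}^FR^m{}_m)_{y^iy^j}y^iy^j=2\,{}^FR^m{}_m$ and hence $\RicooF={}^F Ric_{ij}y^iy^j=\f12({}^FR^m{}_m)_{y^iy^j}y^iy^j={}^FR^m{}_m$. Therefore the whole statement follows by contracting the Riemann tensor of the preceding proposition over $i=k$: the tensorial monomials collapse through $\dIk\mapsto n$, $\yI\yk\mapsto\a^2$, $\yI\bk\mapsto\b$, $\bI\bk\mapsto b^2$, $\yI\rko\mapsto\roo$, $\rIo\yk\mapsto\roo$, $\rIo\bk\mapsto\ro$, and so on, and then one collects and simplifies the resulting scalar invariants into the list $\roo^2,\rooho,\roo\ro,\dots$ appearing in (\ref{yanega;dngadg}), with $\RIk$ turning into the Ricci curvature $\Ricoo$ of $\a$. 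This also serves as a strong internal consistency check on the first proposition.

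To obtain the Riemann tensor in the first place (or to compute $\RicooF$ directly) I would start from the geodesic spray. Writing $s=\b/\a$, the metric is $F=\a\,\phi(b^2,s)$ with $\phi=(b+s)^2$, so it is a general $\ab$-metric in the sense of \cite{yct-zhm-onan}, whose spray coefficients have the shape
\begin{eqnarray*}
G^i=\G+\a Q\,\sIo+\big\{\Theta(\roo-2\a Q\so)+\a\,\Omega(\ro+\so)\big\}\f{\yI}{\a}+\big\{\Psi(\roo-2\a Q\so)+\a\,\Pi(\ro+\so)\big\}\bI,
\end{eqnarray*}
where $Q,\Theta,\Psi,\Omega,\Pi$ are universal functions of $(b^2,s)$ assembled from $\phi$ and its partials $\phi_2,\phi_{22},\phi_1,\phi_{12}$. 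Substituting $\phi=(b+s)^2$ renders each coefficient an explicit rational function of $b\a\pm\b$. I would then feed these into Berwald's formula $\RIkF=2G^i_{x^k}-G^i_{x^ly^k}y^l+2G^lG^i_{y^ky^l}-G^i_{y^l}G^l_{y^k}$ and take the trace $i=k$. The $x$-derivatives of the correction terms generate the covariant derivatives of $\rij$ and $\sij$ (producing $\rooho,\roho,\soho,\sIohi,\dots$), the second $x$-derivatives of $\G$ reproduce $\RIk$ and hence $\Ricoo$, while the quadratic products $G^lG^i_{y^ky^l}$ and $G^i_{y^l}G^l_{y^k}$ yield the bilinear invariants $\roo^2,\qoo,\too,p,q,t,\dots$.

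The essential difficulty is organizational rather than conceptual: once the spray is in hand everything is fixed by differentiation and contraction, but the intermediate expressions are rational functions of $\a$ and $\b$ of very high degree, and the whole result must be reduced over the common denominators $(b\a\pm\b)^k$ and rewritten covariantly so that only $\rij$, $\sij$, their covariant derivatives, and the curvature of $\a$ survive. This is exactly why the computation is carried out in a computer algebra system and the outcome recorded directly; I expect the reduction and the covariant reorganization of all ordinary $x$-derivatives to be the main obstacle, whereas the priori formulae of Section~2 are not needed at this stage (they intervene only later, when (\ref{yanega;dngadg}) is analysed under the Einstein hypothesis).
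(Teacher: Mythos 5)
Your proposal is correct and follows essentially the same route as the paper: the author states that both the Riemann and Ricci tensors of (\ref{singularBerwald}) are obtained by an elementary, mechanical Maple computation from the spray coefficients and Berwald's formula, which is exactly the calculation you describe, and the priori formulae of Section~2 are indeed not used at this stage. Your observation that homogeneity gives $\RicooF={}^FR^m{}_m$, so that the proposition is just the $i=k$ trace of the preceding one (with $\dIk\mapsto n$, $\yI\yk\mapsto\a^2$, etc.), is a valid shortcut and consistency check rather than a genuinely different method.
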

\end{document}